\newtheorem{theorem}{Theorem}[section]
\newtheorem{lemma}[theorem]{Lemma}
\newtheorem{proposition}[theorem]{Proposition}
\newtheorem{corollary}[theorem]{Corollary}
\newtheorem{fact}[theorem]{Fact}
\theoremstyle{definition}
\newtheorem{definition}[theorem]{Definition}
\newtheorem{example}[theorem]{Example}
\newtheorem{problem}[theorem]{Problem}
\newtheorem{remark}[theorem]{Remark}
\newtheorem{question}[theorem]{Question}
\newtheorem{claim}{Claim}
\def\Cl{\mathrm{cl}}
\def\B{\mathcal{B}}
\def\A{\mathcal{A}}
\def\N{\mathbb{N}}
\def\R{\mathbb{R}}
\def\T{\mathbb{T}}
\def\V{\mathcal{V}}
\def\C{\mathfrak{c}}
\def\sp{sequentially pseudocompact}
\def\ssp{selectively sequentially pseudocompact}
\def\strong{selectively pseudocompact}
\def\strongness{selective pseudocompactness}
\def\SSP{selectively (sequentially) pseudocompact}
\def\SSPness{selective (sequential) pseudocompactness}
\def\gp#1{\langle{#1}\rangle}
\begin{document}
\title{Selective sequential pseudocompactness}
\author[A. Dorantes-Aldama]{Alejandro Dorantes-Aldama}
\address{Department of Mathematics, Faculty of Science, Ehime University,
  Matsuyama 790-8577, Japan}
\email{alejandro\_dorantes@ciencias.unam.mx}
\thanks{The first listed author was supported by CONACyT of M\'exico: Estancias Posdoctorales al Extranjero propuesta No. 263464}

\author[D. Shakhmatov]{Dmitri Shakhmatov}
\address{Division of Mathematics, Physics and Earth Sciences\\
Graduate School of Science and Engineering\\
Ehime University, Matsuyama 790-8577, Japan}
\email{dmitri.shakhmatov@ehime-u.ac.jp}
\thanks{The second listed author was partially supported by the Grant-in-Aid for Scientific Research~(C) No.~26400091 by the Japan Society for the Promotion of Science (JSPS)}

\begin{abstract} 
We say that a topological space $X$ is {\em \ssp} 
if for every family $\{U_n:n\in\N\}$ of non-empty open subsets of $X,$
one can 
choose 
a point $x_n\in U_n$ for every 
$n\in\N$
in such a way that
the sequence 
$\{x_n:n\in \N\}$ 
has a convergent subsequence. We show that 
the class of \ssp\ spaces is closed under taking arbitrary products and continuous images, contains the class of all dyadic spaces and forms a proper subclass of the class of strongly pseudocompact spaces introduced recently by Garc\'ia-Ferreira and Ortiz-Castillo.
We 
investigate basic properties of this new class and its relations with  known compactness properties.
We prove that every $\omega$-bounded (=the closure of which countable set is compact) group is \ssp, while compact spaces need not be \ssp.
Finally, we construct \ssp\ group topologies on both the free group
and the free Abelian group  with continuum-many generators.
\end{abstract}

\dedicatory{Dedicated to Professor Filippo Cammaroto on the occasion of his 65th anniversary}

\subjclass[2010]{Primary: 54D30; Secondary: 22A05, 22C05, 54A20, 54H11}

\keywords{convergent sequence, sequentially compact,  pseudocompact, strongly pseudocompact, sequentially pseudocompact,  topological group, free abelian group, variety of groups}

\maketitle

{\em All topological spaces are considered to be Tychonoff and all topological groups Hausdorff.\/}

The symbol $\N$ denotes the set of natural numbers, $\R$ denotes the set of real numbers, $\T$ denotes
the circle group $\{e^{i\theta} :\theta \in \R\}\subseteq \R^2.$ 

If $f:X\to Y$ is a function and $A$ is a subset of $Y$, then we let $f^{\leftarrow }(A)=\{x\in X:f(x)\in A\}$. 

For
a subset $A$ of a topological space $X$, we use cl$_X(A)$ to denote the closure of $A$ in $X$.

Let us recall two well-known 
compactness-type properties.
\begin{definition}\label{definition_1} A topological space $X$ is called:
\begin{itemize}
\item[(i)] {\em sequentially compact} if every sequence in $X$ has a convergent subsequence;
\item[(ii)] {\em pseudocompact\/} if every continuous real-valued function defined on $X$ is bounded;
\end{itemize}
\end{definition}

The notion of sequential compactness is well known; see, for example, \cite[Section 3.10]{E}.  
The notion of pseudocompactness is due to Hewitt \cite{H}.

\begin{definition}
We say that an indexed sequence $\{U_n:n\in\N\}$ is: 
\begin{itemize}
\item
{\em point-finite\/} provided that the set $\{n\in\N:x\in U_n\}$ 
is finite for each point $x\in X$;
\item
{\em locally finite\/} provided that each point $x\in X$ has an open neighborhood $W$ such that the set $N(W)=\{n\in\N: W\cap U_n\not=\emptyset\}$ is finite.
\end{itemize}
\end{definition}

Clearly, every locally finite sequence is point-finite.

The following fact is well known.

\begin{fact}
\label{psc:characterization}
For a topological space $X$, the following conditions are equivalent:
\begin{itemize}
\item[(i)] $X$ is pseudocompact;
\item[(ii)] every sequence of non-empty open subsets of $X$ is not locally finite;
\item[(ii)] every sequence of pairwise disjoint non-empty open subsets of $X$ is not locally finite.
\end{itemize}
\end{fact}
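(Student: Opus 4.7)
The plan is to close the cycle $(i)\Rightarrow(ii)\Rightarrow(iii)\Rightarrow(i)$. The implication $(ii)\Rightarrow(iii)$ will be immediate: a pairwise disjoint sequence of non-empty open sets is in particular a sequence of non-empty open sets, so the hypothesis of $(ii)$ subsumes that of $(iii)$.

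For $(i)\Rightarrow(ii)$, I would argue by contrapositive. Starting from a locally finite sequence $\{U_n:n\in\N\}$ of non-empty open sets, I would pick $x_n\in U_n$ for each $n$ and use the standing Tychonoff hypothesis to manufacture continuous functions $f_n:X\to[0,n]$ with $f_n(x_n)=n$ and $f_n\equiv 0$ outside $U_n$. Setting $f=\sum_{n\in\N}f_n$, local finiteness of $\{U_n\}$ guarantees that every point of $X$ has a neighborhood on which all but finitely many $f_n$ vanish, so the sum defines a continuous real-valued function on $X$. Since $f(x_n)\geq n$, this $f$ is unbounded, contradicting pseudocompactness.

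For $(iii)\Rightarrow(i)$, also by contrapositive: given a continuous unbounded $g:X\to\R$, replacing $g$ by $|g|$ I may assume $g\geq 0$. I would inductively select points $x_n$ with $g(x_{n+1})>g(x_n)+2$ and set $U_n=g^{\leftarrow}\bigl((g(x_n)-1/2,\,g(x_n)+1/2)\bigr)$. Each $U_n$ is a non-empty open set containing $x_n$, and the spacing between the values $g(x_n)$ makes the family pairwise disjoint. For local finiteness, given $x\in X$, the neighborhood $W=g^{\leftarrow}((g(x)-1/2,g(x)+1/2))$ meets $U_n$ only when $|g(x_n)-g(x)|<1$, and this holds for only finitely many $n$ since $g(x_n)\to\infty$. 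The resulting family contradicts $(iii)$.

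No serious obstacle arises; the one point requiring care is in $(i)\Rightarrow(ii)$, where the Tychonoff hypothesis is needed to supply the bump functions $f_n$ and local finiteness is precisely what turns the formal sum $\sum f_n$ into a genuinely continuous function on all of $X$ rather than merely a pointwise-defined one.
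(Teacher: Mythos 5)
Your proof is correct. Note that the paper itself offers no proof of this Fact --- it is stated as well known --- so there is nothing to compare against; your cycle $(i)\Rightarrow(ii)\Rightarrow(iii)\Rightarrow(i)$ is the standard argument, with the two contrapositive steps (summing locally finitely many bump functions supplied by complete regularity to get an unbounded continuous function, and pulling back disjoint unit intervals along an unbounded function to get a locally finite disjoint family) carried out correctly.
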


\section{Disjoint refinement principle}

The next lemma is probably known. We include its proof here only for the reader's convenience.
   
\begin{lemma}
\label{disjoint:sequence}
Let $\mathscr{U}=\{U_n:n\in\N\}$ be a point-finite family of non-empty open subsets of a topological space $X$
which is not locally finite.
Then there exist a sequence $\mathscr{V}=\{V_k:k\in\N\}$ of pairwise disjoint non-empty open subsets of $X$
and a strictly increasing sequence $\{n_k:k\in\N\}\subseteq \N$
such that $V_k\subseteq U_{n_k}$ for all $k\in\N$. 
\end{lemma}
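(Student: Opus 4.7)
The plan is to exploit the tension between point-finiteness (each point meets only finitely many $U_n$) and the failure of local finiteness (some point meets infinitely many $U_n$ in every neighborhood) at a single witness point, and then use the Tychonoff/regularity separation to carve out the disjoint $V_k$'s inductively.

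First I would fix a witness point $x \in X$ to the failure of local finiteness: every open neighborhood $W$ of $x$ satisfies that $N(W) = \{n\in\N : W\cap U_n\neq\emptyset\}$ is infinite. By point-finiteness the set $F = \{n\in\N : x\in U_n\}$ is finite, so in particular for all but finitely many $n$ we have $x\notin U_n$. This combination is what will drive the construction.

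Next I would build $\{V_k:k\in\N\}$ and $\{n_k:k\in\N\}$ recursively, maintaining the invariants that the $V_j$ for $j\leq k$ are pairwise disjoint, $V_j\subseteq U_{n_j}$, the indices $n_j$ are strictly increasing, and crucially $x\notin\Cl_X(V_j)$. At stage $k+1$, the set $W_k = X\setminus\bigcup_{j\leq k}\Cl_X(V_j)$ is an open neighborhood of $x$; hence $N(W_k)$ is infinite, and since only finitely many indices lie in $F$ or are $\leq n_k$, I can pick $n_{k+1}>n_k$ with $n_{k+1}\notin F$ and $W_k\cap U_{n_{k+1}}\neq\emptyset$. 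Choose $y_{k+1}\in W_k\cap U_{n_{k+1}}$; since $n_{k+1}\notin F$, we have $x\neq y_{k+1}$ and $x\notin W_k\cap U_{n_{k+1}}$. Tychonoff spaces are regular, so I can find an open $V_{k+1}$ with $y_{k+1}\in V_{k+1}$ and $\Cl_X(V_{k+1})\subseteq W_k\cap U_{n_{k+1}}$; this forces $V_{k+1}\subseteq U_{n_{k+1}}$, disjointness from the earlier $V_j$ (since $V_{k+1}\subseteq W_k$), and $x\notin\Cl_X(V_{k+1})$, preserving all invariants.

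The only real subtlety is making sure the recursion never stalls, and the point-finiteness hypothesis is exactly what rules that out: without it, the witness point $x$ could lie in every $U_n$, and while $W_k$ would still meet infinitely many $U_n$, nothing would prevent $x$ from sitting in their closures and obstructing the Tychonoff separation used to shrink to a $V_{k+1}$ disjoint from the previous $V_j$. So the heart of the argument is the single observation that point-finiteness lets us discard the finitely many bad indices in $F$ at each step, after which regularity does the rest.
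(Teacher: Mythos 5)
Your proof is correct and takes essentially the same route as the paper's: fix a witness point $x$ for the failure of local finiteness, use point-finiteness at $x$ to avoid the (finitely many) indices $n$ with $x\in U_n$, and recursively apply regularity to shrink inside $W_k\cap U_{n_{k+1}}$, the invariant $x\notin\Cl_X(V_j)$ keeping each $W_k$ a neighborhood of $x$. The only cosmetic difference is that the paper discards all indices below a fixed threshold $n_{-1}$ once and for all, whereas you avoid the finite set $F$ at each step; these are equivalent.
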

\begin{proof}
Since $\mathscr{U}$ is not locally finite,
there exists $x\in X$ 
such that 
\begin{equation}
\label{star2}
\text{the set }
N(W)=\{n\in\N: W\cap U_n\not=\emptyset\}
\text{ is infinite for every open neighborhood }
W
\text{ of }x.
\end{equation} 
Since $\mathscr{U}$ is point-finite, we can fix $n_{-1}\in \N$
such that
\begin{equation}
\label{star}
x\not\in U_n
\text{ for every }
n\in\N
\text{ satisfying }
n\ge n_{-1}. 
\end{equation}

By induction on $k\in\N$ we shall 
select $n_k\in\N$ 
and a non-empty open subset $V_k$ of $X$ 
satisfying the following conditions:
\begin{itemize}
\item[(i$_k$)] 
$n_{k-1}<n_k$,
\item[(ii$_k$)] 
$\Cl_X(V_k)\subseteq U_{n_k}$,
\item[(iii$_k$)]  if $k\ge 1$, then $V_k\cap V_j=\emptyset$
whenever $j\in\N$ and $j<k$.
\end{itemize}

We let $n_0=n_{-1}+1$ and use regularity of $X$ to fix a non-empty open 
subset $V_0$ of $U_{n_0}$ satisfying (ii$_0$). Condition (i$_0$) is clear and condition 
(iii$_0$) is vacuous.

Suppose now that $k\in\N$, $k\ge 1$ and $n_{k-1}\in \N$
and a non-empty open subset $V_{k-1}$ of $X$ satisfying 
(i$_{k-1}$)--(iii$_{k-1}$) have already been chosen.

Let $j\in\N$ and $j<k$.
Since 
(i$_l$) holds for all $l\in\N$ with $l\le j$, we have $n_{-1}<n_0<\dots<n_j$.
Combining this with 
\eqref{star}, we conclude that
$x\not\in U_{n_j}$. Finally, from this and (ii$_j$), we get
$x\not\in \Cl_X(V_j)$.
Since this holds for every $j\in\N$ with $j<k$,   
it follows that 
 $W=X\setminus \bigcup_{j<k} \Cl_X(V_j)$ is an open neighborhood 
 of $x$. Since the set $N(W)$ is infinite by 
 \eqref{star2},
 we can choose $n_k\in N(W)$ satisfying $n_{k-1}<n_k$.
Then $W\cap U_{n_k}\not=\emptyset$ by the definition of $N(W)$,
so we can choose a point $z\in W\cap U_{n_k}$ and its open neighborhood $V_k$ such that $\Cl_X(V_k)\subseteq U_{n_k}$.
Clearly, conditions (i$_{k}$)--(iii$_{k}$) are satisfied.
The inductive step has been completed.

The sequence $\{n_k:k\in\N\}$ is strictly increasing, as (i$_k$) holds for every $k\in\N$.
Since (iii$_k$) holds for all $k\in\N$, we conclude that
$\mathscr{V}=\{V_k:k\in\N\}$ is a sequence of non-empty pairwise disjoint 
open subsets of $X$. 
Finally, $V_k\subseteq U_{n_k}$  by (ii$_k$).
\end{proof}

\begin{definition}
Let $X$ be a topological space.
\begin{itemize}
\item[(i)] $\mathbb{U}_X$ denotes the family of all sequences $U=\{U(n):n\in\N\}$ of non-empty open subsets of $X$.
\item[(ii)] We say that $U\in \mathbb{U}_X$ is {\em pairwise disjoint\/}
if $U(n)\cap U(m)=\emptyset$ whenever $m,n\in\N$ and $m\not=n$.
\item[(iii)] If $U\in\mathbb{U}_X$ and $s:\N\to\N$ is a monotonically increasing function, then the function $U\circ s\in\mathbb{U}_X$ shall be called a {\em subsequence\/} of $U$.
\end{itemize}
\end{definition}

\begin{proposition}
\label{truth:proposition}
Let $X$ be a pseudocompact space and let $\mathbf{P}:\mathbb{U}_X\to \{0,1\}$ be a function satisfying the following conditions:
\begin{itemize}
\item[(a)] $\mathbf{P}(U)=1$ whenever $U\in\mathbb{U}_X$ is not point-finite;
\item[(b)] if $\mathbf{P}(V)=1$ for some subsequence $V$ of 
$U\in \mathbb{U}_X$, then $\mathbf{P}(U)=1$.
\item[(c)] if $U,V\in\mathbb{U}_X$, $V(n)\subseteq U(n)$ for every $n\in\N$ and  $\mathbf{P}(V)=1$, then $\mathbf{P}(U)=1$.
\item[(d)] if $V\in\mathbb{U}_X$ is pairwise disjoint, then
$\mathbf{P}(V)=1$.
\end{itemize}
Then $\mathbf{P}(U)=1$ for all $U\in\mathbb{U}_X$.
\end{proposition}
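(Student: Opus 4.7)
The plan is to reduce an arbitrary $U\in\mathbb{U}_X$ to the pairwise disjoint case handled by hypothesis (d), going through a subsequence of $U$ and using a containment refinement. The four hypotheses are tailor-made to be applied in sequence, and Lemma~\ref{disjoint:sequence} is the bridge that links them.

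Fix $U\in\mathbb{U}_X$ and split into two cases. If $U$ is not point-finite, hypothesis (a) immediately gives $\mathbf{P}(U)=1$. Assume therefore that $U$ is point-finite. Since $X$ is pseudocompact, Fact~\ref{psc:characterization} implies that the sequence $U=\{U(n):n\in\N\}$ is not locally finite. Thus the hypotheses of Lemma~\ref{disjoint:sequence} are met, yielding a pairwise disjoint family $\mathscr{V}=\{V_k:k\in\N\}$ of non-empty open subsets of $X$ together with a strictly increasing sequence $\{n_k:k\in\N\}\subseteq\N$ such that $V_k\subseteq U(n_k)$ for all $k\in\N$.

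Now chain the remaining three hypotheses. Define $s:\N\to\N$ by $s(k)=n_k$; this is monotonically increasing, so $U\circ s\in\mathbb{U}_X$ is a subsequence of $U$ in the sense of the definition preceding the proposition, and $(U\circ s)(k)=U(n_k)$. Regard $\mathscr{V}$ as an element $V\in\mathbb{U}_X$ via $V(k)=V_k$. Since $V$ is pairwise disjoint, hypothesis (d) gives $\mathbf{P}(V)=1$. Because $V(k)\subseteq (U\circ s)(k)$ for every $k\in\N$, hypothesis (c) applied to the pair $(U\circ s,V)$ yields $\mathbf{P}(U\circ s)=1$. Finally, since $U\circ s$ is a subsequence of $U$ with $\mathbf{P}$-value $1$, hypothesis (b) gives $\mathbf{P}(U)=1$, completing the argument.

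There is no real obstacle here beyond organizing the bookkeeping: the lemma of the previous section does the combinatorial work, and the four axioms (a)--(d) are precisely what is needed to climb from a pairwise disjoint refinement back up to the original sequence, via the intermediate step of a subsequence. The only thing to be careful about is to notice that pseudocompactness is only needed in the point-finite case, and that the containment $V_k\subseteq U(n_k)$ must be matched with the subsequence $U\circ s$ rather than with $U$ itself before applying (c).
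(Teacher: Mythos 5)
Your proposal is correct and follows essentially the same route as the paper's own proof: reduce to the point-finite case via (a), invoke pseudocompactness and Lemma~\ref{disjoint:sequence} to produce a pairwise disjoint refinement of a subsequence, and then chain (d), (c), (b) in that order. No gaps.
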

\begin{proof}
Let $U\in\mathbb{U}_X$ be arbitrary. By (a), we may assume, without loss of generality, that $U$ is point-finite. Since $X$ is pseudocompact, $U$ cannot be locally finite by Fact~\ref{psc:characterization}.
By Lemma~\ref{disjoint:sequence},
there exist a pairwise disjoint $V\in\mathbb{U}_X$ and a monotonically increasing function $s:\N\to \N$ such that 
\begin{equation}
\label{V:subseq:of:U}
V(k)\subseteq U(s(k))=(U\circ s)(k)
\text{ for every }
k\in\N.
\end{equation}
Since $V\in\mathbb{U}_X$ is pairwise disjoint,
$\mathbf{P}(V)=1$ by (d).
From this, \eqref{V:subseq:of:U}
and
(c), we get $\mathbf{P}(U\circ s)=1$.
Since $U\circ s$ is a subsequence of $U$, (b) implies
$\mathbf{P}(U)=1$.
\end{proof}

\begin{corollary}
\label{equivalence:for:disjoint}
Let $X$ be a pseudocompact space and let $\mathbf{P}:\mathbb{U}_X\to \{0,1\}$ be a function satisfying conditions (a), (b), (c) from Proposition~\ref{truth:proposition}.
Then the following conditions are equivalent:
\begin{itemize}
\item[(i)] $\mathbf{P}(U)=1$ for every $U\in\mathbb{U}_X$;
\item[(ii)] $\mathbf{P}(V)=1$ for each pairwise disjoint  $V\in\mathbb{U}_X$.
\end{itemize}
\end{corollary}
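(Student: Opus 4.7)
The proof is essentially immediate from Proposition~\ref{truth:proposition}, so my plan is just to unpack which direction uses what.

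For the implication (i)$\Rightarrow$(ii), there is nothing to do: every pairwise disjoint $V\in\mathbb{U}_X$ is in particular an element of $\mathbb{U}_X$, so (i) directly gives $\mathbf{P}(V)=1$.

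For the nontrivial direction (ii)$\Rightarrow$(i), the key observation is that the hypothesis (ii) is exactly condition~(d) of Proposition~\ref{truth:proposition}. Thus, assuming (ii), the function $\mathbf{P}$ satisfies all four hypotheses (a), (b), (c), (d) of that proposition. Applying the proposition then yields $\mathbf{P}(U)=1$ for every $U\in\mathbb{U}_X$, which is (i).

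Since both directions are routine given Proposition~\ref{truth:proposition}, there is no real obstacle here; the only ``work'' has already been absorbed into that proposition (namely, using pseudocompactness together with Lemma~\ref{disjoint:sequence} to pass from an arbitrary point-finite sequence to a pairwise disjoint subsequence refinement). Accordingly, I would present the proof as a two-sentence argument and not repeat any of the disjoint-refinement machinery.
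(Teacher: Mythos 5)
Your proof is correct and is exactly the argument the paper intends (the paper in fact states this corollary without proof, since it is immediate from Proposition~\ref{truth:proposition}): (i)$\Rightarrow$(ii) is trivial, and under (ii) condition (d) holds, so the proposition gives (i). Nothing further is needed.
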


We demonstrate the usefulness of this corollary 
on the 
following
property which is perhaps 
not so
familiar to the reader.

\begin{definition}
\label{def:sp}
A topological space $X$ is called 
{\em \sp\/} if for every family $\{U_n:n\in\N\}$ of non-empty open subsets of $X$,
there exists an infinite set $J\subseteq \N $ and a point $x\in X$ such that the
set $\{n\in J:W\cap U_n=\emptyset \}$ is finite for every open 
neighborhood $W$ of $x$.
\end{definition}

This notion 
appeared in \cite[Definition 1.4]{DPSW} under the name sequentially feebly compact. 
A formally weaker property obtained by requiring the conclusion of Definition~\ref{def:sp} to hold only for the families
 $\{U_n:n\in\N\}$ consisting of  pairwise disjoint non-empty open
subsets of $X$ was defined earlier in \cite[Definition 1.8]{AMPRT}.
It was proved in \cite[Proposition 1]{L} that 
these two versions of Definition~\ref{def:sp}
are in fact equivalent.  
This result can be easily derived from the disjoint refinement principle.

\begin{corollary}
\cite[Proposition 1]{L} 
For every topological space $X$, the following conditions are equivalent:
\begin{itemize}
\item[(i)] $X$ is \sp;
\item[(ii)] for every sequence $\{U_n:n\in\N\}$ of pairwise disjoint non-empty open subsets of $X$,
there exists an infinite set $J\subseteq \N $ and a point $x\in X$ such that the
set $\{n\in J:W\cap U_n=\emptyset \}$ is finite for every open 
neighborhood $W$ of $x$.
\end{itemize}
\end{corollary}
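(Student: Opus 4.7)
The plan is to apply Corollary~\ref{equivalence:for:disjoint} with an appropriate choice of the property~$\mathbf{P}$. The direction (i)$\Rightarrow$(ii) is immediate, since pairwise disjoint sequences form a subfamily of $\mathbb{U}_X$. For (ii)$\Rightarrow$(i), I would first observe that (ii) forces $X$ to be pseudocompact, so that the corollary is actually applicable: given any pairwise disjoint sequence $\{U_n:n\in\N\}$ of non-empty open subsets of~$X$, (ii) supplies an infinite $J\subseteq\N$ and a point~$x$ such that $W\cap U_n\neq\emptyset$ for all but finitely many $n\in J$ and every neighborhood $W$ of~$x$; in particular the sequence is not locally finite, so pseudocompactness of $X$ follows from Fact~\ref{psc:characterization}.

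Next, I would define $\mathbf{P}:\mathbb{U}_X\to\{0,1\}$ by setting $\mathbf{P}(U)=1$ precisely when there exist an infinite $J\subseteq \N$ and a point $x\in X$ such that the set $\{n\in J:W\cap U(n)=\emptyset\}$ is finite for every open neighborhood $W$ of~$x$. By construction, $X$ is \sp\ iff $\mathbf{P}(U)=1$ for every $U\in\mathbb{U}_X$, while hypothesis~(ii) is exactly the statement that $\mathbf{P}(V)=1$ for every pairwise disjoint $V\in\mathbb{U}_X$, i.e.\ condition~(d) of Proposition~\ref{truth:proposition}.

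The bulk of the work is to check conditions (a), (b), (c) of Proposition~\ref{truth:proposition} for this~$\mathbf{P}$. For (a), if $U$ is not point-finite, pick $x\in X$ with $J=\{n\in\N:x\in U(n)\}$ infinite; then $x\in W\cap U(n)$ for every $n\in J$ and every neighborhood $W$ of~$x$, so $\mathbf{P}(U)=1$. For (b), if $V=U\circ s$ witnesses $\mathbf{P}(V)=1$ via $(J',x)$, then $(s(J'),x)$ witnesses $\mathbf{P}(U)=1$, since $s$ is strictly monotone and $V(k)=U(s(k))$. For (c), if $V(n)\subseteq U(n)$ for all $n$ and $(J,x)$ witnesses $\mathbf{P}(V)=1$, then the same pair witnesses $\mathbf{P}(U)=1$ because $W\cap V(n)\neq\emptyset$ implies $W\cap U(n)\neq\emptyset$. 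All three verifications are straightforward from the definition.

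With (a)--(d) in place and $X$ pseudocompact, Corollary~\ref{equivalence:for:disjoint} yields $\mathbf{P}(U)=1$ for every $U\in\mathbb{U}_X$, i.e.\ (i). The only step that requires any thought is recovering pseudocompactness of~$X$ from~(ii), but that is almost immediate from Fact~\ref{psc:characterization}; everything else reduces to unwinding definitions, and the disjoint refinement principle packaged in Corollary~\ref{equivalence:for:disjoint} does all the real work.
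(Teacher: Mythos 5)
Your proposal is correct and follows essentially the same route as the paper: both reduce (ii)$\Rightarrow$(i) to Corollary~\ref{equivalence:for:disjoint} by first extracting pseudocompactness of $X$ from (ii) via Fact~\ref{psc:characterization} and then defining the same truth function $\mathbf{P}$. The only difference is that you spell out the verifications of (a), (b), (c), which the paper dismisses as clear.
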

\begin{proof}
The implication (i)$\Rightarrow $(ii) is trivial.
Let us prove that (ii)$\Rightarrow$(i).
Note that (ii) implies that every pairwise disjoint sequence of non-empty  open subsets of $X$ is not locally finite. Therefore, $X$ is pseudocompact by Fact~\ref{psc:characterization}.

Define the function $\mathbf{P}:\mathbb{U}_X\to \{0,1\}$
as follows.
For $U\in\mathbb{U}_X$, we let $\mathbf{P}(U)=1$ if 
there exists an infinite set $J\subseteq \N $ and a point $x\in X$ such that the
set $\{n\in J:W\cap U(n)=\emptyset \}$ is finite for every open 
neighborhood $W$ of $x$, and we define 
$\mathbf{P}(U)=0$ otherwise.
  The function $\mathbf{P}$ clearly satisfies 
conditions (a), (b), (c) of  Proposition~\ref{truth:proposition}.
Now the conclusion follows from Corollary~\ref{equivalence:for:disjoint}.
\end{proof}

\section{Two ``selective'' pseudocompactness-type properties}

The (equivalent) properties from the next theorem can be considered ``selective'' properties, as they all involve a selection 
of a point from each  set of a countable sequence of non-empty open sets in such a way that the resulting  sequence satisfies  some condition agreed in advance.

\begin{theorem}
\label{equiv}
For every topological space $X$, the following conditions are equivalent:
\begin{itemize}
\item[(i)] for each sequence $\{U_n:n\in\N\}$ of  pairwise disjoint non-empty open subsets of $X$,
one can choose a point $x_n\in U_n$ for every $n\in\N$ such that
the set $\{x_n:n\in\N\}$ is not closed in $X$;
\item[(ii)] for each sequence $\{U_n:n\in\N\}$ of  pairwise disjoint non-empty open subsets of 
$X$, one can choose a point 
$x_n\in U_n$ for each $n\in\N$ 
such that
$\Cl_X(\{x_n:n\in \N\})\setminus \bigcup_{n\in\N}U_n\not=\emptyset$;
\item[(iii)] for each sequence $\{U_n:n\in\N\}$ of non-empty open subsets of $X$,
one can choose a point $x\in X$ and a point 
$x_n\in U_n$ for each $n\in\N$ 
such that  
the set $\{n\in \N:x_n\in W\}$ is infinite for every open neighborhood $W$ of $x$;
\item[(iv)] for each sequence $\{U_n:n\in\N\}$ of non-empty open subsets of $X$ 
on can choose a point $x_n\in U_n$ for every $n\in\N$,
select 
a free ultrafilter\footnote{Recall that an ultrafilter $p$ on $\N$ is called {\em free\/} if
$\bigcap p=\emptyset .$} 
 $p$ on $\N$ and find a point $x\in X$ such that  
$\{n\in \N:x_n\in W\}\in p$
for every open neighborhood $W$ of $x$.
\end{itemize}
\end{theorem}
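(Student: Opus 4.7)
My plan is to set up the cycle (i)$\Rightarrow$(iii)$\Rightarrow$(ii)$\Rightarrow$(i), treating (iii)$\Leftrightarrow$(iv) as a standard ultrafilter extension argument. The two implications (iii)$\Rightarrow$(ii) and (ii)$\Rightarrow$(i) I would handle by direct unpacking. Applying~(iii) to a pairwise disjoint sequence yields a point $x$ that cannot lie in any $U_k$: otherwise $U_k$ would be an open neighborhood of $x$ meeting $x_n$ only for $n=k$ (by disjointness), contradicting the infiniteness of $\{n:x_n\in U_k\}$. Thus $x\in\Cl_X(\{x_n:n\in\N\})\setminus\bigcup_{n\in\N}U_n$, proving~(ii). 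A point $y$ in this difference satisfies $y\neq x_n$ for every $n$ (since $x_n\in U_n$), so $y$ witnesses that $\{x_n:n\in\N\}$ is not closed, giving~(i).

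For (iii)$\Leftrightarrow$(iv): given~(iii), the family $\{\{n:x_n\in W\}:W\text{ is an open neighborhood of }x\}\cup\{\N\setminus F:F\subseteq\N\text{ finite}\}$ has the finite intersection property, since every member of the first type is infinite and intersecting it with finitely many cofinite sets leaves an infinite set. Any ultrafilter extending this family contains all cofinite sets, hence is free, and witnesses~(iv). Conversely, every member of a free ultrafilter on $\N$ is infinite, so~(iv) trivially gives~(iii).

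The real work is~(i)$\Rightarrow$(iii), which I would derive via Corollary~\ref{equivalence:for:disjoint}. First, (i) forces $X$ to be pseudocompact: if $X$ fails pseudocompactness then by Fact~\ref{psc:characterization} there is a locally finite pairwise disjoint sequence $\{U_n\}$ of non-empty open sets, and any selection $x_n\in U_n$ yields a closed discrete subset of $X$, violating~(i). Next, define $\mathbf{P}\colon\mathbb{U}_X\to\{0,1\}$ by declaring $\mathbf{P}(U)=1$ iff the conclusion of~(iii) holds for $U$. Conditions~(a), (b), (c) of Proposition~\ref{truth:proposition} are routine: for~(a) set $x_n=x$ whenever $x\in U(n)$; for~(b) pull chosen points back along the strictly increasing subsequence index; for~(c) the selection for $V$ already lies in $U(n)\supseteq V(n)$. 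To verify $\mathbf{P}(V)=1$ for pairwise disjoint $V$, invoke~(i) to pick $x_n\in V(n)$ so that $\{x_n:n\in\N\}$ is not closed, and take any $x\in\Cl_X(\{x_n:n\in\N\})\setminus\{x_n:n\in\N\}$; a standard Hausdorff separation argument shows that $x$ is an $\omega$-accumulation point of $\{x_n\}$, so $\{n:x_n\in W\}$ is infinite for every open neighborhood $W$ of $x$. Corollary~\ref{equivalence:for:disjoint} then yields $\mathbf{P}\equiv 1$, which is precisely~(iii). The main obstacle I anticipate is bridging the gap between~(i), which speaks only about pairwise disjoint sequences, and~(iii), which demands a choice for every sequence; the disjoint refinement machinery of Section~1 is designed precisely for this transition, so once the hypotheses of Corollary~\ref{equivalence:for:disjoint} are checked no further combinatorics is required.
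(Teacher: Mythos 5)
Your proposal is correct and follows essentially the same route as the paper: the one substantive step, (i)$\Rightarrow$(iii), is handled exactly as in the paper via pseudocompactness and the disjoint refinement machinery of Proposition~\ref{truth:proposition} and Corollary~\ref{equivalence:for:disjoint}, with the same verification of conditions (a)--(d). The only cosmetic difference is the arrangement of the easy arrows --- the paper closes the cycle as (iv)$\Rightarrow$(ii)$\Rightarrow$(i)$\Rightarrow$(iii)$\Rightarrow$(iv), whereas you prove (iii)$\Rightarrow$(ii) directly and attach (iv) through (iii)$\Leftrightarrow$(iv) --- which changes nothing of substance.
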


\proof
(iv)$\Rightarrow $(ii) is clear.

(ii)$\Rightarrow$(i) Let $\{U_n:n\in\N\}$ be a sequence of non-empty pairwise disjoint open subsets of $X$.
Let $\{x_n:n\in\N\}$ be the sequence as in the conclusion of 
item (ii).
Since 
$\{x_n:n\in \N\}\subseteq \bigcup_{n\in\N}U_n$, yet
$\Cl_X(\{x_n:n\in \N\})\setminus \bigcup_{n\in\N}U_n\not=\emptyset$,
  the set $\{x_n:n\in \N\}$ is not closed in $X$. 

(i)$\Rightarrow$(iii)
First, note that the condition (i) implies that 
an infinite sequence of pairwise disjoint non-empty open subsets of $X$ cannot be locally finite. Applying Fact~\ref{psc:characterization}, we conclude that 
$X$ is pseudocompact. 

Define the function $\mathbf{P}:\mathbb{U}_X\to \{0,1\}$
as follows.
For $U\in\mathbb{U}_X$, we let $\mathbf{P}(U)=1$ if 
one can chose a point $x\in X$ and a point 
$x_n\in U(n)$ for each $n\in\N$ 
such that  
the set $\{n\in \N:x_n\in W\}$ is infinite for every open neighborhood $W$ of $x$,
and we define 
$\mathbf{P}(U)=0$ otherwise.
  The function $\mathbf{P}$ clearly satisfies 
conditions (a), (b), (c) of  Proposition~\ref{truth:proposition}.

Let 
use check the  condition (d) of this proposition.
Suppose that $V\in\mathbb{U}_X$ is pairwise disjoint.
Apply (i) to choose a point $x_k\in V(k)$ for every $k\in\N$
such that the set $\{x_k:k\in\N\}$ is not closed in $X$.
Let $x\in \Cl_X(\{x_k:k\in \N\})\setminus \{x_k:k\in \N\}$.
  One can easily check now that $\{k\in W: x_k\in W\}$ 
  is infinite for every neighborhood $W$ of $x$.

We have checked that $\mathbf{P}$ satisfies all the assumptions of Proposition~\ref{truth:proposition}, which implies that 
$\mathbf{P}(U)=1$ for all $U\in\mathbb{U}_X$. This means that 
(iii) holds.

(iii)$\Rightarrow$(iv) Let $\{U_n:n\in\N\}$ be a sequence of non-empty open subsets of $X$. 
By (iii), we
can chose a point $x\in X$ and a point 
$x_n\in U_n$ for each $n\in\N$ 
such that  
the set $N_W=\{n\in \N:x_n\in W\}$ is infinite for every open neighbourhood $W$ of $x$.
Therefore,
$$
\mathcal{B}=\{N_W: W
\text{ is an open neighborhood of }
x\}\cup\{\N\setminus F: F
\text { is a finite subset of }
\N\}
$$
is a base of a free filter on $\N.$ Let $p$ be an ultrafilter on $\N$
satisfying $\mathcal{B}\subseteq p$; such an ultrafilter exists by Zorn's lemma. 
One easily checks that $p$ satisfies (iv).
\endproof

\begin{definition}
\label{def:new:name}
A topological space $X$ satisfying any (and then all) of the equivalent conditions in Theorem~\ref{equiv} 
will be called {\em \strong\/}.
\end{definition}

The property from item (iv) of Theorem~\ref{equiv}
appeared recently in~\cite{GO,GT}
under the name ``strong pseudocompactness''.
The property from item (ii) of Theorem~\ref{equiv} appeared under the same name in
 the abstract of  \cite{GT}.
Since the term ``strongly pseudocompact'' is used in  \cite{AG,Di} to denote two different properties, Definition~\ref{def:new:name} proposes a new name for 
this property
reflecting its``selective'' nature.
The proposed new name  also matches the name of another
property introduced in Definition~\ref{def:sequential:seq:psc} below.

To the best of our knowledge, the following ``selective'' property is new. 

\begin{definition}
\label{def:sequential:seq:psc}
We shall call a topological space $X$ 
{\em \ssp} 
if for every family $\{U_n:n\in\N\}$ of non-empty open subsets of $X,$
one can 
choose 
a point $x_n\in U_n$ for every 
$n\in\N$
in such a way that
the sequence 
$\{x_n:n\in \N\}$ 
has a convergent subsequence.
\end{definition}

\begin{proposition}
\label{disjoint:condition:for:ssp}
For every topological space $X$, the following conditions are equivalent:
\begin{itemize}
\item[(i)] $X$ is \ssp;
\item[(ii)] for every sequence $\{U_n:n\in\N\}$ of pairwise disjoint non-empty open subsets of $X$, one can choose $x_n\in U_n$ for all $n\in \N$ such that the sequence $\{x_n:n\in\N\}$ has a convergent subsequence.
\end{itemize}
\end{proposition}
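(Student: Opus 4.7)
The implication (i)$\Rightarrow$(ii) is immediate since (ii) is the restriction of the condition defining \sspness\ to pairwise disjoint sequences. For the converse, the plan is to apply Corollary~\ref{equivalence:for:disjoint} to a suitably chosen property $\mathbf{P}$, so the preparatory task is to verify that $X$ is pseudocompact under the assumption (ii). Given a pairwise disjoint sequence $\{U_n:n\in\N\}$ of non-empty open subsets of $X$, (ii) lets us pick $x_n\in U_n$ so that some subsequence $\{x_{n_k}\}$ converges to a point $x\in X$. For any open neighborhood $W$ of $x$, the set $\{k\in\N:x_{n_k}\in W\}$ is cofinite, and since $x_{n_k}\in U_{n_k}$, the set $N(W)=\{n\in\N:W\cap U_n\neq\emptyset\}$ is infinite. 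Thus no pairwise disjoint sequence of non-empty open sets is locally finite, and Fact~\ref{psc:characterization} yields pseudocompactness of $X$.

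Next, define $\mathbf{P}:\mathbb{U}_X\to\{0,1\}$ by setting $\mathbf{P}(U)=1$ exactly when one can pick $x_n\in U(n)$ for each $n\in\N$ so that the resulting sequence $\{x_n:n\in\N\}$ admits a convergent subsequence. I would verify conditions (a), (b), (c) of Proposition~\ref{truth:proposition} in turn. For (a), if $U$ is not point-finite, there is $x\in X$ and an infinite $J\subseteq\N$ with $x\in U(n)$ for all $n\in J$; putting $x_n=x$ for $n\in J$ and $x_n\in U(n)$ arbitrary otherwise produces a constant convergent subsequence. For (b), if $V=U\circ s$ is a subsequence with $\mathbf{P}(V)=1$, witnessed by $y_k\in V(k)=U(s(k))$ whose sequence has a convergent subsequence, then setting $x_{s(k)}=y_k$ and choosing $x_n\in U(n)$ arbitrarily for $n\notin s(\N)$ inherits that convergent subsequence, giving $\mathbf{P}(U)=1$. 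For (c), if $V(n)\subseteq U(n)$ and $\mathbf{P}(V)=1$, the same selection $x_n\in V(n)\subseteq U(n)$ witnesses $\mathbf{P}(U)=1$.

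The hypothesis (ii) is precisely the statement that $\mathbf{P}$ satisfies condition~(ii) of Corollary~\ref{equivalence:for:disjoint}. Since $X$ is pseudocompact and $\mathbf{P}$ satisfies (a), (b), (c) of Proposition~\ref{truth:proposition}, the corollary yields $\mathbf{P}(U)=1$ for all $U\in\mathbb{U}_X$, which is exactly (i).

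There is no real obstacle here; the argument is a direct application of the disjoint refinement machinery built in the previous section. The only step demanding a small amount of care is clause (b), where one must make sure the ``missing'' indices outside the range of $s$ can be filled in without destroying convergence — but since we only require \emph{some} convergent subsequence of $\{x_n:n\in\N\}$, any choice on the complement works.
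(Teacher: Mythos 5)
Your proof is correct and follows essentially the same route as the paper's: both establish pseudocompactness from (ii), define the same predicate $\mathbf{P}$, verify (a)--(c), and then invoke the disjoint refinement machinery (you cite Corollary~\ref{equivalence:for:disjoint}, the paper applies Proposition~\ref{truth:proposition} directly after noting that (ii) gives condition (d), which amounts to the same thing). Your explicit verifications of (b) and the pseudocompactness step, which the paper leaves as ``clearly''/``one easily sees'', are accurate.
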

\begin{proof}
(i)$\Rightarrow$(ii) trivially follows from Definition~\ref{def:sequential:seq:psc}.

(ii)$\Rightarrow$(i) One easily sees that (ii) implies that every sequence pairwise disjoint non-empty open subsets of $X$ fails to be locally finite. Therefore, $X$ is pseudocompact by Fact~\ref{psc:characterization}.

Define the function $\mathbf{P}:\mathbb{U}_X\to \{0,1\}$
as follows.
For $U\in\mathbb{U}_X$, we let $\mathbf{P}(U)=1$ if 
one can chose a point 
$x_n\in U(n)$ for each $n\in\N$ 
such that  
the sequence $\{x_n:n\in\N\}$ has a convergent subsequence,
and we define 
$\mathbf{P}(U)=0$ otherwise.
  The function $\mathbf{P}$ clearly satisfies 
conditions (b) and (c) of  Proposition~\ref{truth:proposition}.

Let us check also condition (a).  If $U\in\mathbb{U}_X$ is not point-finite, then there exists $x\in X$
such that the set $N=\{n\in\N: x\in U(n)\}$ is infinite.
Define $x_n=x$ for all $n\in  N$ and choose $x_n\in U(n)$ arbitrarily
for each $n\in\N\setminus N$.
Now the sequence $\{x_n:n\in\N\}$ has a constant (thus, convergent) subsequence.

Finally, (ii) implies that $\mathbf{P}$ satisfies condition (d)
as well.

We have checked that $\mathbf{P}$ satisfies all the assumptions of Proposition~\ref{truth:proposition}, which implies that 
$\mathbf{P}(U)=1$ for all $U\in\mathbb{U}_X$. This means that 
(i) holds.
\end{proof}

The following diagram summarizes connections between 
two ``selective'' pseudocompactness-type properties and other compactness-like properties.

\begin{center} \medskip\hspace{1em}
\xymatrix{
& \text{compact}\ar[d]\\
\text{sequentially compact}\ar[r]\ar[d]_1 & \text{countably compact}\ar[d]_2\\
\text{\ssp}\ar[r]\ar[d]_3 & \text{\strong}\ar[d]_4\\
\text{\sp}\ar[r]& \text{pseudocompact}
}
\label{Diagram:1}
\end{center}
\begin{center}
Diagram 1.
\end{center}

\begin{example}
{\em A compact space need not be \sp\/}. 
Indeed,
the Stone-\v{C}ech compactification $\beta \omega $ of $\omega $ is not \sp\ 
\cite[Example 2.9]{DPSW}.
Hence, none of the properties on the right side 
of Diagram 1 imply any of the properties on the left side. 
\end{example}

\begin{example}
Examples of selectively pseudocompact spaces which are not countably compact were constructed in \cite{GT}.
Therefore, arrow 2 is not reversible.
\end{example}

\begin{example}
Let $X$ be
a pseudocompact space such that all its countable subsets are closed constructed in 
\cite[Theorem 2]{S}. 
By Theorem~\ref{equiv}~(i), $X$ is
not \strong. Hence, arrow 4 is not reversible.
\end{example}

In Example~\ref{ex:5.4} below we shall show that 
arrow 1 is not reversible. Arrow 3 is not reversible either; see Example~\ref{ex:1} below.

\section{Basic properties of \SSPness}

In this section we collect basic properties of the class of \SSP\ spaces.

\begin{proposition}
\label{sequences:in:ssp:spaces}
Every infinite \ssp\ space has a non-trivial convergent sequence.
\end{proposition}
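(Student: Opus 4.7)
The plan is to reduce to Proposition~\ref{disjoint:condition:for:ssp}(ii), which characterises \sspness\ via sequences of \emph{pairwise disjoint} non-empty open sets. Concretely, if we can exhibit such a sequence $\{U_n:n\in\N\}$ in $X$, then \sspness\ provides a selector $x_n\in U_n$ together with a convergent subsequence $\{x_{n_k}:k\in\N\}$ whose limit is some $x\in X$. Because the $U_n$ are pairwise disjoint, the points $x_{n_k}$ are pairwise distinct, so at most one of them coincides with $x$; consequently the subsequence is a non-trivial convergent sequence in $X$.

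It therefore suffices to produce infinitely many pairwise disjoint non-empty open subsets of $X$. I would split into two cases depending on the set $I$ of isolated points of $X$. If $I$ is infinite, enumerate distinct $p_n\in I$ and take $U_n=\{p_n\}$. Otherwise $I$ is finite, so $Y=X\setminus I$ is an infinite open subspace of $X$ (using that finite subsets of the $T_1$ space $X$ are closed), and every non-empty open subset of $Y$ is infinite: a non-empty finite open subset of $Y$ would be discrete as a finite Hausdorff space, so one of its points would be isolated in $X$ while lying in $Y$, a contradiction. Starting from $W_0=Y$, I then recursively pick two distinct points in $W_k$ and use the Hausdorff property to separate them by disjoint open sets $U_k,W_{k+1}\subseteq W_k$; since $W_{k+1}$ is again a non-empty open subset of $Y$, the recursion never stalls. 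The resulting family $\{U_k:k\in\N\}$ is pairwise disjoint, because $U_j\subseteq W_j\subseteq W_{k+1}$ is disjoint from $U_k$ whenever $j>k$, and consists of non-empty open subsets of $X$.

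The main obstacle is just the bookkeeping of the recursion in the second case: one must verify that after separating off $U_k$, the leftover open set $W_{k+1}$ remains large enough to continue the construction. The key invariant is precisely that every non-empty open subset of $Y$ is infinite, which supplies two distinct points to separate at each step. Once the disjoint family has been constructed, the conclusion follows at once from Proposition~\ref{disjoint:condition:for:ssp}(ii) together with the pairwise-distinctness observation in the first paragraph.
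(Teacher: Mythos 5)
Your proof is correct and follows essentially the same route as the paper's: both extract an infinite pairwise disjoint family of non-empty open subsets of the infinite Hausdorff space $X$, apply the selection property to that family, and observe that disjointness makes the chosen points pairwise distinct, so the convergent subsequence cannot be eventually constant. The only difference is that you spell out the standard construction of the disjoint family (via the isolated-point dichotomy), whereas the paper simply asserts its existence from Hausdorffness.
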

\begin{proof}
Let $X$ be an infinite \ssp\ space. Since $X$ is Hausdorff,
there exists a sequence $\{U_n:n\in\N\}$ consisting of pairwise disjoint non-empty open subsets of $X$.
By Proposition~\ref{disjoint:condition:for:ssp}, 
one can choose $x_n\in U_n$ for all $n\in \N$ such that the sequence $\{x_n:n\in\N\}$ has a convergent subsequence.
Since $U_n\cap U_m=\emptyset$ whenever $m,n\in\N$ and $m\not=n$, this subsequence is non-trivial.
\end{proof}

\begin{proposition}
\label{continuous_image}
\label{cont_image_strong_pseudo}
Let $f:X\to Y$ be a continuous function from a topological space $X$ onto a topological space $Y.$ If $X$ is \SSP, then so is $Y$. 
\end{proposition}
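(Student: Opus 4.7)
The plan is to pull back the given sequence of non-empty open subsets of $Y$ via $f^{\leftarrow}$, apply the selective property in $X$, and push the chosen points (together with the witnessing limit point or neighbourhood base) forward through $f$. Since the same pull-back/push-forward scheme handles both versions encoded in \SSPness, I would treat the sequential and non-sequential cases in parallel, starting in each case with an arbitrary $\{V_n:n\in\N\}\in\mathbb{U}_Y$. Continuity of $f$ guarantees that each $U_n=f^{\leftarrow}(V_n)$ is open in $X$, and surjectivity guarantees that each $U_n$ is non-empty, so $\{U_n:n\in\N\}\in\mathbb{U}_X$.

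In the \ssp\ case, I would apply Definition~\ref{def:sequential:seq:psc} to $\{U_n:n\in\N\}$ to choose $x_n\in U_n$ such that some subsequence $\{x_{n_k}:k\in\N\}$ converges to a point $x\in X$. Setting $y_n=f(x_n)\in V_n$, continuity of $f$ yields $f(x_{n_k})\to f(x)$ in $Y$, so $\{y_n:n\in\N\}$ has a convergent subsequence and $Y$ is \ssp.

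In the \strong\ case, I would use condition (iii) of Theorem~\ref{equiv} applied in $X$ to obtain a point $x\in X$ and points $x_n\in U_n$ such that $\{n\in\N:x_n\in W\}$ is infinite for every open neighbourhood $W$ of $x$. Put $y=f(x)$ and $y_n=f(x_n)\in V_n$. Given any open neighbourhood $W'$ of $y$ in $Y$, the set $W=f^{\leftarrow}(W')$ is an open neighbourhood of $x$ in $X$, and the inclusion $\{n\in\N:x_n\in W\}\subseteq\{n\in\N:y_n\in W'\}$ makes the latter set infinite. Applying Theorem~\ref{equiv} in the reverse direction in $Y$ concludes that $Y$ is \strong.

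No real obstacle is expected; the proof is a routine transfer argument. The only minor design choice worth flagging is which equivalent formulation of \strongness\ to transport, and condition (iii) of Theorem~\ref{equiv} is convenient because it accommodates arbitrary (not necessarily pairwise disjoint) sequences of non-empty open sets and avoids dragging an ultrafilter through $f$.
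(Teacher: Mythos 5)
Your proof is correct and follows essentially the same pull-back/push-forward scheme as the paper: pull the open sets back via $f^{\leftarrow}$, select points in $X$ using the selective property, and push the points and the witnessing limit forward through $f$ by continuity. The only cosmetic difference is that the paper restricts attention to pairwise disjoint sequences and transports condition (ii) of Theorem~\ref{equiv} (resp.\ invokes Proposition~\ref{disjoint:condition:for:ssp} on the $Y$ side), whereas you work with arbitrary sequences and transport condition (iii); both choices are equally valid.
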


\proof
Let $\{V_n:n\in\N\}$ be a sequence of pairwise disjoint non-empty open subsets of $Y$.
For every $n\in\N$ define $U_n=f^{\leftarrow }(V_n)$.
Since $f$ is continuous and onto, $\{U_n:n\in\N\}$ is a sequence of pairwise disjoint non-empty open subsets of $X.$ 
Now the proof splits into two cases.

If $X$ is \ssp, one can 
choose 
a point $x_n\in U_n$ for every 
$n\in\N$
such that
the sequence 
$\{x_n:n\in \N\}$ 
has a convergent subsequence.
Clearly $f(x_n)\in V_n$ for every $n\in\N.$
Since $f$ is continuous, the sequence $\{f(x_n):n\in \N\}$ has a convergent subsequence in $Y$. Applying Proposition~\ref{disjoint:condition:for:ssp} to $Y$, we conclude that $Y$ is \ssp.

If $X$ is \strong,
by Theorem~\ref{equiv}(ii),  
we can choose $x_n\in U_n$ for each $n\in\N$  
and find $x\in X$
such that 
$x\in \Cl_X(\{x_n:n\in \N\})\setminus \bigcup_{n\in\N}V_n$. 
Then 
$f(x_n)\in V_n$ for all $n\in\N$ 
and 
$f(x)\in\Cl_X(\{f(x_n):n\in \N\})\setminus \bigcup_{n\in\N}V_n$. 
Applying Theorem~\ref{equiv}(ii) to $Y$, we conclude that $Y$ is \strong.
\endproof

\begin{lemma}
\label{hitting:lemma}
Suppose that $X$ is a topological space having the following property:
For every countable family $\{U_n:n\in\N\}$ of non-empty open subsets of $X$,
there exists a 
\SSP\ 
subspace $Y$ of $X$ such that $U_n\cap Y\not=\emptyset$
for all $n\in\N$. Then $X$ is \SSP. 
\end{lemma}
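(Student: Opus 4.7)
The plan is a direct ``witnessing subspace'' argument, handling both cases covered by \SSPness\ (the sequential and the non-sequential one) essentially in parallel. Given an arbitrary sequence $\{U_n:n\in\N\}$ of non-empty open subsets of $X$, I would invoke the hypothesis to obtain a \SSP\ subspace $Y\subseteq X$ with $U_n\cap Y\neq\emptyset$ for every $n\in\N$. The sequence $\{U_n\cap Y:n\in\N\}$ then consists of non-empty open subsets of $Y$ in the subspace topology, so the selective property of $Y$ feeds on it and produces points $x_n\in U_n\cap Y\subseteq U_n$ together with the relevant witness of convergence inside $Y$. All that remains is to transfer this witness from $Y$ to $X$.

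For the sequential version, \ssp-ness of $Y$ applied to $\{U_n\cap Y:n\in\N\}$ yields $x_n\in U_n\cap Y$ such that some subsequence $\{x_{n_k}:k\in\N\}$ converges in $Y$ to a point $y\in Y$. Because $Y$ carries the subspace topology, this same subsequence converges to $y$ in $X$. Hence the choice $x_n\in U_n$ satisfies Definition~\ref{def:sequential:seq:psc}, showing $X$ is \ssp.

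For the non-sequential version I would use characterization (iii) of Theorem~\ref{equiv}: \strongness\ of $Y$ provides $x_n\in U_n\cap Y$ and $x\in Y$ such that $\{n\in\N:x_n\in W'\}$ is infinite for every neighborhood $W'$ of $x$ in $Y$. If $W$ is any open neighborhood of $x$ in $X$, then $W\cap Y$ is an open neighborhood of $x$ in $Y$, and since each $x_n$ lies in $Y$ the conditions $x_n\in W$ and $x_n\in W\cap Y$ are equivalent. Thus $\{n\in\N:x_n\in W\}$ is infinite for every open neighborhood $W$ of $x$ in $X$, and (iii) of Theorem~\ref{equiv} gives that $X$ is \strong.

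I do not expect any genuine obstacle: the argument is a one-step lift along the inclusion $Y\hookrightarrow X$, relying only on the two elementary facts that (a) convergence in a subspace implies convergence in the ambient space to the same limit, and (b) every neighborhood of $x\in Y$ in $X$ restricts to a neighborhood of $x$ in $Y$. The only mild care needed is to note that, thanks to Proposition~\ref{disjoint:condition:for:ssp} and Theorem~\ref{equiv}, we are free to work with arbitrary (not necessarily pairwise disjoint) sequences $\{U_n:n\in\N\}$, so no disjointification of $\{U_n\cap Y:n\in\N\}$ inside $Y$ is required.
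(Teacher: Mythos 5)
Your proof is correct and follows essentially the same route as the paper: obtain the witnessing subspace $Y$ from the hypothesis, apply the selective property of $Y$ to the traces $U_n\cap Y$, and lift the resulting witness along the inclusion $Y\hookrightarrow X$. The only (immaterial) difference is that the paper works with pairwise disjoint sequences and the ``not closed'' formulation of Theorem~\ref{equiv}(i), whereas you use arbitrary sequences and formulation (iii); both transfers from $Y$ to $X$ are valid for the same elementary reasons you cite.
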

\begin{proof}
Let $\{U_n:n\in\N\}$ be a sequence of pairwise disjoint non-empty open subsets of $X.$
By our hypothesis, 
$X$ contains a 
\SSP\ 
subspace $Y$ such that 
$V_n=U_n\cap Y\not=\emptyset$
for every $n\in\N$.
Since each $V_n$ is 
open in $Y$
and 
$Y$ is 
\SSP,
we can choose a point $x_n\in V_n\subseteq U_n$ for every $n\in\N$
such that the 
set $\{x_n:n\in\N\}$ is not closed in $Y$, and thus also in $X$ 
(the sequence $\{x_n:n\in\N\}$ has a subsequence converging
to some element of $Y$, respectively).
This shows that $X$ is \SSP.
\end{proof}

\begin{corollary}\label{dense}
If some dense subspace of a topological space
$X$ 
is \SSP,
then $X$ itself is \SSP.
\end{corollary}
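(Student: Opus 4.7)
The plan is to reduce immediately to Lemma~\ref{hitting:lemma} by taking the dense \SSP\ subspace itself as the witness $Y$ for every countable family of open sets in $X$.

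Concretely, let $D$ be a dense subspace of $X$ which is \SSP, and let $\{U_n:n\in\N\}$ be an arbitrary countable family of non-empty open subsets of $X$. Since $D$ is dense in $X$ and each $U_n$ is a non-empty open set, we have $U_n\cap D\neq\emptyset$ for every $n\in\N$. Thus $D$ is a single \SSP\ subspace of $X$ that meets every $U_n$, which is exactly the hypothesis needed to invoke Lemma~\ref{hitting:lemma}. Applying that lemma yields that $X$ is \SSP.

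There is no real obstacle here; the work has already been done in the hitting lemma. The only thing to note is that we use the same $Y=D$ for every family $\{U_n:n\in\N\}$, so the statement of Lemma~\ref{hitting:lemma} (which allows $Y$ to depend on the family) is applied in a particularly simple form.
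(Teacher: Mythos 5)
Your proof is correct and is exactly the argument the paper intends: the corollary is stated immediately after Lemma~\ref{hitting:lemma} with no proof, precisely because a dense subspace meets every non-empty open set and so serves as the witness $Y$ for every family. Nothing further is needed.
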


\begin{proposition}\label{cofinal_are_ssp}
If every countable subset of a topological space $X$ is contained in a 
\SSP\ 
subspace of $X$,
then $X$ is 
\SSP.
\end{proposition}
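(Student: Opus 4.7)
The plan is to reduce this statement to Lemma~\ref{hitting:lemma} (the ``hitting'' lemma). That lemma says that to show $X$ is \SSP, it suffices to produce, for each countable family $\{U_n:n\in\N\}$ of non-empty open subsets of $X$, a single \SSP\ subspace $Y\subseteq X$ which meets every $U_n$. So my task reduces to extracting such a $Y$ from the stronger hypothesis that \emph{every} countable subset of $X$ sits inside some \SSP\ subspace.

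Concretely, I would start with an arbitrary sequence $\{U_n:n\in\N\}$ of non-empty open subsets of $X$ and, using the axiom of choice, pick a point $y_n\in U_n$ for each $n\in\N$. The set $C=\{y_n:n\in\N\}$ is then a countable subset of $X$, so by the hypothesis of the proposition there exists a \SSP\ subspace $Y$ of $X$ with $C\subseteq Y$. This $Y$ automatically satisfies $U_n\cap Y\supseteq\{y_n\}\neq\emptyset$ for every $n\in\N$. Therefore the hypothesis of Lemma~\ref{hitting:lemma} is verified, and that lemma immediately delivers the conclusion that $X$ is \SSP.

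There is no real obstacle here: the argument is essentially bookkeeping once Lemma~\ref{hitting:lemma} is in hand. The only mild subtlety is that Lemma~\ref{hitting:lemma} is stated for arbitrary countable families of non-empty open sets (not necessarily pairwise disjoint), which matches what we need, so no preliminary reduction to the pairwise disjoint case (via Proposition~\ref{disjoint:condition:for:ssp}) is required. The same proof works uniformly for both variants of \SSPness, which is why the conclusion can be stated in the combined ``\SSP'' form.
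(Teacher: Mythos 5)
Your proof is correct and is essentially identical to the paper's: both select a point $x_n\in U_n$ for each $n$, take the \SSP\ subspace $Y$ containing the resulting countable set, and invoke Lemma~\ref{hitting:lemma}. Nothing to add.
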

\proof
Let $\{U_n:n\in\N\}$ be a family of non-empty open subsets of $X.$
Take $x_n\in U_n$ for every $n\in\N$.
By our hypothesis, the countable subset $\{x_n:n\in\N\}$ of $X$ is contained in a 
\SSP\ 
subspace $Y$ of $X$.
Since $x_n\in U_n\cap Y\not=\emptyset$ for every $n\in\N$, the conclusion follows from Lemma~\ref{hitting:lemma}.
\endproof

\begin{proposition}
A clopen subspace of a \SSP\ space is \SSP.
\end{proposition}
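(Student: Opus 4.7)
The plan is a direct transfer argument that exploits both halves of ``clopen'': openness of the subspace $C$ lets us lift a sequence of non-empty open subsets of $C$ to a sequence of non-empty open subsets of the ambient \SSP\ space $X$, and closedness of $C$ lets us bring the resulting limit point (or accumulation point) back inside $C$. Because the notation \SSP\ covers two properties at once, I would mirror the structure of Proposition~\ref{continuous_image} and split the argument into the \ssp\ and \strong\ cases after this common setup.

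So let $X$ be \SSP, let $C$ be a clopen subspace of $X$, and let $\{U_n:n\in\N\}$ be a sequence of pairwise disjoint non-empty open subsets of $C$; by Proposition~\ref{disjoint:condition:for:ssp} (in the sequential case) and by Theorem~\ref{equiv}~(i) (in the non-sequential case), working with pairwise disjoint families suffices. Since $C$ is open in $X$, each $U_n$ is a non-empty open subset of $X$, and the $U_n$'s remain pairwise disjoint.

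In the \ssp\ case, apply \sspness\ of $X$ to choose $x_n\in U_n$ so that some subsequence $\{x_{n_k}:k\in\N\}$ converges in $X$ to a point $x$. Since $x_{n_k}\in U_{n_k}\subseteq C$ for every $k\in\N$ and $C$ is closed in $X$, we have $x\in \Cl_X(C)=C$, so the convergence takes place in $C$, witnessing \sspness\ of $C$ by Proposition~\ref{disjoint:condition:for:ssp}. In the \strong\ case, apply Theorem~\ref{equiv}~(iii) to the same sequence of open sets to obtain $x_n\in U_n$ for $n\in\N$ and $x\in X$ such that $\{n\in\N:x_n\in W\}$ is infinite for every open neighbourhood $W$ of $x$ in $X$. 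Again $\{x_n:n\in\N\}\subseteq C$ and $C$ is closed, hence $x\in C$, and the traces on $C$ of neighbourhoods of $x$ in $X$ are exactly the neighbourhoods of $x$ in $C$; thus the same choice of points witnesses condition~(iii) of Theorem~\ref{equiv} for $C$, so $C$ is \strong.

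There is no real obstacle here; the only subtlety worth flagging is that clopenness is used twice and essentially—openness for the lifting of the $U_n$ into open sets of $X$, and closedness for pulling the limit/cluster point back into $C$—so the argument would fail for a subspace that is merely open or merely closed.
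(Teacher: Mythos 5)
Your proof is correct and follows essentially the same route as the paper's: openness of the subspace lifts the $U_n$ to open subsets of $X$, and closedness pulls the limit (or cluster) point back into the subspace. The only cosmetic difference is that you first reduce to pairwise disjoint families, which the paper skips by applying Theorem~\ref{equiv}(iii) to arbitrary sequences directly; this changes nothing of substance.
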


\proof
Let $Y$ be a clopen subspace of a \SSP\ space $X$.
Let $\{U_n:n\in\N\}$ be a sequence of non-empty open subsets of $Y.$ 
Since $Y$ is open in $X,$ each $U_n$ is also open in $X.$ 
Since $X$ is \SSP, 
we can use item (iii) of Theorem~\ref{equiv}
to 
choose a point $x_n\in U_n$ for every $n\in\N$
and $x\in X$ such that the set $\{n\in\N: x_n\in W\}$
is infinite for every open neighborhood $W$ of $x$ (respectively,
the sequence $\{x_n:n\in\N\}$ has a subsequence converging
to $x\in X$).
Since $\{x_n:n\in\N\}\subseteq Y$ and $Y$ is closed in $X$,  
it follows that $x\in Y$.
This shows that $Y$  is \SSP.
\endproof

\section{\SSPness\ in ($\Sigma$-)products}

\begin{lemma}\label{countable_product}
A countable product of \ssp\/ spaces is \ssp\/. 
\end{lemma}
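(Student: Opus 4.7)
The plan is a standard diagonal argument over the countably many coordinates. Let $X=\prod_{i\in\N}X_i$ with each $X_i$ \ssp, and let $\{U_n:n\in\N\}$ be a sequence of non-empty open subsets of $X$. First, for each $n\in\N$ I would shrink $U_n$ to a non-empty basic open set $B_n=\prod_{i\in\N}W_{n,i}\subseteq U_n$, where each $W_{n,i}$ is non-empty open in $X_i$ and $W_{n,i}=X_i$ for all but finitely many $i$. Any point chosen from $B_n$ then automatically lies in $U_n$, so from this point on I work with the $B_n$'s.

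Next I would construct, by recursion on $i\in\N$, a nested chain $\N=A_{-1}\supseteq A_0\supseteq A_1\supseteq\cdots$ of infinite subsets of $\N$, together with points $y_n^{(i)}\in W_{n,i}$ for all $n\in\N$, so that the sequence $\{y_n^{(i)}:n\in A_i\}$ (listed in increasing order of $n$) converges in $X_i$. At step $i$, the countable family $\{W_{n,i}:n\in A_{i-1}\}$ consists of non-empty open subsets of $X_i$, so \sspness\ of $X_i$ supplies selections $y_n^{(i)}\in W_{n,i}$ for $n\in A_{i-1}$ together with an infinite $A_i\subseteq A_{i-1}$ along which these selections converge; for $n\notin A_{i-1}$ I would just choose $y_n^{(i)}\in W_{n,i}$ arbitrarily, so that the selection is defined on all of $\N$.

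Now set $x_n=(y_n^{(i)})_{i\in\N}\in B_n\subseteq U_n$ and let $m_k$ be the $k$-th element of $A_k$ (since the $A_k$'s are infinite and nested one can arrange $m_0<m_1<\cdots$). For every fixed coordinate $i$ and every $k\ge i$ one has $m_k\in A_k\subseteq A_i$, so $\{x_{m_k}(i)\}_{k\ge i}=\{y_{m_k}^{(i)}\}_{k\ge i}$ is a tail of a subsequence of the convergent sequence $\{y_n^{(i)}:n\in A_i\}$ and therefore converges in $X_i$. Coordinatewise convergence in every $X_i$ then delivers convergence of $\{x_{m_k}\}_{k\in\N}$ in the product topology on $X$, which is exactly what \sspness\ of $X$ demands.

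The only delicate point is the bookkeeping: the choices made at different stages must assemble into a single point $x_n\in B_n$ for each $n$, and the nested $A_i$'s must yield one increasing sequence $\{m_k\}$ whose tail eventually lies in every $A_i$, so that one subsequence of $\{x_n\}$ serves all coordinates simultaneously. Both requirements are handled by the nesting and the standard diagonal extraction described above.
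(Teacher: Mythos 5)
Your argument is correct and is essentially the paper's own proof: reduce to basic open boxes, recursively select points coordinatewise while passing to nested infinite index sets $A_0\supseteq A_1\supseteq\cdots$, and then extract a single diagonal subsequence $\{m_k\}$ that is eventually contained in every $A_i$, so that coordinatewise convergence gives convergence in the product. The bookkeeping you flag (assembling the coordinate choices into one point of $B_n$ and the monotonicity of the $m_k$) works exactly as you describe and matches the paper's construction of $J_\omega$.
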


\proof
Suppose that 
$X_i$ is a \ssp\/ space for every $i\in\N$, and let
$X=\prod_{i\in\N}X_i$ 
be the Tychonoff product 
of the sequence $\{X_i: i\in\N\}$. Let $\{U_n:n\in\N\}$ be a sequence of nonempty open subsets of $X$.
Without loss of generality, we may assume that each $U_n$ has the form $U_n=\prod_{i\in\N}U_{n,i}$, where $U_{n,i}$ is a nonempty open subset of $X_i$.

Let $J_{-1}=\N$. Since each $X_i$ is \ssp, 
by a straightforward induction on $i\in \N$, one can choose $x_{n,i}\in U_{n,i}$ for every $n\in \N$ and select a point $x_i\in X$ and an infinite set $J_i\subseteq J_{i-1}$ such that  the sequence
$\{x_{n,i}:n\in J_i\}$ converges to $x_i$.

Clearly,
$y_n=(x_{n,i})_{i\in\N}\in \prod_{i\in\N}U_{n,i}=U_n$ for every $n\in\N.$     

Let $j_0\in J_0$ be arbitrary.
For every $i\geq 1$, choose  $j_i\in J_i\setminus \{j_k:k\leq i-1\}$;
this can be done as $J_i$ is infinite.
By our construction, the set
$J_\omega =\{j_i:i\in\N\}$ is infinite,
while the set $J_\omega \setminus J_i$ is finite for every $i\in\N$.

Let $i\in \N$ be arbitrary. Since the sequence 
$\{x_{n,i}:n\in J_i\}$ converges to $x_i$ and $J_\omega \setminus J_i$ is finite, it follows that the sequence $\{x_{n,i}:n\in J_\omega\}$ converges to $x_i$ as well.
Since this holds for every $i\in\N$, we conclude that
the sequence $\{y_n:n\in J_\omega\}$ converges to 
$y=(x_{i})_{i\in\N}\in X.$
\endproof

\begin{definition}
If $\{X_i:i\in I\}$ is a family of sets, 
$X=\prod\{X_i:i\in I\}$ is their product
and $p$ is a point in $X$, 
then the subset 
\begin{equation}
\label{Sigma:product}
\Sigma  (p,X)=
\{f\in X:|\{i\in I:f(i)\not=p(i)\}|
\leq \omega  \}
\end{equation}
  of $X$ is called the {\em $\Sigma $-product of $\{X_i:i\in I\}$ with the basis point $p\in X.$\/} 
\end{definition}

The standard proof of the following lemma is omitted.
\begin{lemma}
Let $\{X_i:i\in I\}$ be a family of topological spaces and $X=\prod\{X_i:i\in I\}$
be its product and $p\in X$.
Then every countable subset of $\Sigma(p,X)$ is contained 
in a subspace of $\Sigma(p,X)$ homeomorphic to the product $\prod_{i\in J} X_i$ 
for some at most countable set $J\subseteq I$.
\end{lemma}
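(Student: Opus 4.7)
The plan is to use the straightforward construction: given a countable set $C=\{f_n:n\in\N\}\subseteq \Sigma(p,X)$, collect all coordinates on which any $f_n$ differs from the basepoint $p$, and show that the subspace of $\Sigma(p,X)$ consisting of functions supported on that countable set of coordinates both contains $C$ and is a copy of the corresponding subproduct.

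More concretely, first I would set $J_n=\{i\in I: f_n(i)\not=p(i)\}$ for each $n\in\N$. By the very definition \eqref{Sigma:product} of $\Sigma(p,X)$, each $J_n$ is at most countable, so $J=\bigcup_{n\in\N} J_n\subseteq I$ is at most countable as well. Next, define
\[
Y=\{f\in \Sigma(p,X): f(i)=p(i) \text{ for every } i\in I\setminus J\}.
\]
Since $f_n$ agrees with $p$ off $J_n\subseteq J$, one has $f_n\in Y$ for every $n\in\N$, so $C\subseteq Y$.

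It remains to exhibit a homeomorphism $\varphi:Y\to \prod_{i\in J}X_i$. The natural choice is the restriction map $\varphi(f)=f\!\upharpoonright\! J$. Its inverse sends $g\in\prod_{i\in J}X_i$ to the extension $\tilde g\in X$ defined by $\tilde g(i)=g(i)$ for $i\in J$ and $\tilde g(i)=p(i)$ for $i\in I\setminus J$; this $\tilde g$ lies in $\Sigma(p,X)$ because it differs from $p$ only on the countable set $J$, and by construction $\tilde g\in Y$. The maps $\varphi$ and $g\mapsto \tilde g$ are inverse bijections.

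The only step that requires a little care is verifying that $\varphi$ is a homeomorphism. I would argue by comparing standard subbases: a subbasic open set in $Y$ has the form $\{f\in Y: f(i)\in V\}$ for some $i\in I$ and open $V\subseteq X_i$. If $i\in J$ this is precisely $\varphi^{-1}(\{g\in\prod_{j\in J}X_j: g(i)\in V\})$, a subbasic open set in $\prod_{i\in J}X_i$. If $i\in I\setminus J$, then $f(i)=p(i)$ for all $f\in Y$, so the set is either all of $Y$ (when $p(i)\in V$) or empty (otherwise); in both cases it is open and its image under $\varphi$ is open. Symmetrically, every subbasic open set of $\prod_{i\in J}X_i$ pulls back to a subbasic open set in $Y$. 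Hence $\varphi$ is continuous, open, and bijective, completing the proof. I do not anticipate any serious obstacle; the only thing to keep tidy is the bookkeeping for coordinates in $I\setminus J$, where the basepoint $p$ pins down the value.
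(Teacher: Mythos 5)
Your proof is correct and is exactly the standard argument that the paper explicitly omits (and which reappears in the proof of Theorem~\ref{sigma_product}): collect the countable union $J$ of the supports, take $Y=\{f\in X:\{i:f(i)\not=p(i)\}\subseteq J\}$, and note that restriction to $J$ is a homeomorphism onto $\prod_{i\in J}X_i$. No issues.
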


\begin{theorem}\label{sigma_product}
Let $\{X_i:i\in I\}$ be a family of topological spaces and $X=\prod\{X_i:i\in I\}$
be its product and $p\in X$. 
\begin{itemize}
\item[(i)] If all $X_i$ are \ssp, then so is  $\Sigma(p,X)$.
\item[(ii)] If $\prod_{i\in J} X_i$ is \ssp\ for every at most countable set $J\subseteq I$, then so is $\Sigma(p,X)$.
\end{itemize}
\end{theorem}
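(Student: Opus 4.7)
The plan is to reduce both parts to the preceding machinery. For part (ii), I would apply the unnamed lemma immediately above the theorem statement: every countable subset of $\Sigma(p,X)$ sits inside a subspace homeomorphic to $\prod_{i\in J}X_i$ for some at most countable $J\subseteq I$. By the hypothesis of (ii), this product is \ssp, so the subspace containing our given countable set is \ssp. Thus $\Sigma(p,X)$ satisfies the hypothesis of Proposition~\ref{cofinal_are_ssp} (every countable subset is contained in a \ssp\ subspace), which yields that $\Sigma(p,X)$ itself is \ssp.

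For part (i), I would simply combine (ii) with Lemma~\ref{countable_product}: if each $X_i$ is \ssp, then for every at most countable $J\subseteq I$ the product $\prod_{i\in J} X_i$ is a countable (possibly finite, in which case the statement is trivial) product of \ssp\ spaces, hence \ssp. Applying (ii) finishes the argument.

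There is essentially no obstacle: both parts are routine once the earlier tools have been set up. The only thing worth being careful about is the choice of basepoint $p$; one should note that the homeomorphism in the preceding lemma depends on $p$ (it embeds $\prod_{i\in J}X_i$ into $\Sigma(p,X)$ by extending a function on $J$ by $p\restriction (I\setminus J)$), but since this is a topological embedding, being \ssp\ transfers from $\prod_{i\in J}X_i$ to its image, which is all that is needed.
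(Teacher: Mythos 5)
Your proposal is correct and follows essentially the same route as the paper: both reduce to the structure lemma on countable subsets of $\Sigma(p,X)$, Lemma~\ref{countable_product}, and Proposition~\ref{cofinal_are_ssp} (the paper merely proves (i) and (ii) in parallel from the decomposition rather than deducing (i) from (ii), which is an immaterial difference in organization).
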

\proof
Let $C$ be a countable subset of $\Sigma(p,X)$. 
By \eqref{Sigma:product}, for every $c\in C$, the set $J_c=\{i\in I:f(i)\not=p(i)\}$ is at most countable. Since $C$ is countable, 
the set $J=\bigcup_{c\in C}J_c$ is at most countable as well. 
Note that $C\subseteq Y$, where $Y=\{f\in X: \{i\in I:f(i)\not=p(i)\}\subseteq J\}$ is a subset of $\Sigma(p,X)$ homeomorphic to $X_J=\prod\{X_i:i\in J\}$.

(i) Since $X_J$ is \ssp\ by Lemma~\ref{countable_product},
from
Proposition~\ref{cofinal_are_ssp} we conclude that $\Sigma(p,X)$ is \ssp.

(ii) Since $X_J$ is \strong\ by our assumption, 
applying
Proposition~\ref{cofinal_are_ssp} we conclude that $\Sigma(p,X)$ is \strong.
\endproof

\begin{corollary}\label{product}
A product of topological spaces is \ssp\/ if and only if each factor is \ssp\/. 
\end{corollary}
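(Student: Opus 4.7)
The plan is to prove both directions by directly invoking the machinery already assembled in the excerpt; nothing new needs to be built.

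For the ``only if'' direction, suppose $X=\prod_{i\in I}X_i$ is \ssp. For each $i\in I$, the projection $\pi_i\colon X\to X_i$ is a continuous surjection (assuming every $X_i$ is nonempty, which is forced because $X$ is nonempty, as every \ssp\ space must admit a nonempty open set). Applying Proposition~\ref{continuous_image} to $\pi_i$, we conclude that each factor $X_i$ is \ssp.

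For the ``if'' direction, suppose every $X_i$ is \ssp. Pick any basepoint $p\in X$ (which exists by the axiom of choice since each $X_i$ is nonempty). By Theorem~\ref{sigma_product}(i), the $\Sigma$-product $\Sigma(p,X)$ is \ssp. I would then invoke Corollary~\ref{dense} once I observe that $\Sigma(p,X)$ is dense in the Tychonoff product $X$: indeed, any basic open set $\prod_{i\in I}U_i$ of $X$ (with $U_i=X_i$ for all but finitely many $i$) contains the point $f$ defined by $f(i)\in U_i$ for the exceptional indices and $f(i)=p(i)$ elsewhere, and this $f$ has finite (hence countable) support relative to $p$, so $f\in\Sigma(p,X)$. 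Therefore $X$ is \ssp\ by Corollary~\ref{dense}.

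There is essentially no obstacle: the two nontrivial ingredients (preservation under continuous images and \sspness\ of $\Sigma$-products of \ssp\ factors) have already been proved, and the density of $\Sigma(p,X)$ in $X$ is a standard and elementary observation. The only thing to be mildly careful about is the trivial case in which some $X_i$ is empty, in which case $X=\emptyset$ and both directions hold vacuously.
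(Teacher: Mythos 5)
Your proposal is correct and follows exactly the paper's own argument: projections plus Proposition~\ref{continuous_image} for necessity, and Theorem~\ref{sigma_product}(i) together with the density of $\Sigma(p,X)$ and Corollary~\ref{dense} for sufficiency. The extra care about empty factors and the explicit density verification are fine but not needed beyond what the paper states.
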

\proof
$(\Rightarrow)$ Follows from Proposition~\ref{continuous_image}.

$(\Leftarrow)$ Follows from~Theorem \ref{sigma_product}~(i) and Corollary~\ref{dense}, as each $\Sigma $-product
is dense in the product.
\endproof

\begin{corollary}
\label{dyadic:spaces}
Every dyadic space (in particular, every compact group) is \ssp.
\end{corollary}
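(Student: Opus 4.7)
The plan is to chain together three results already established in the paper. Recall that a dyadic space is, by definition, a continuous image of some power $\{0,1\}^\kappa$ of the two-point discrete space. So it suffices to show that $\{0,1\}^\kappa$ is \ssp\ for every cardinal $\kappa$, and then invoke preservation under continuous surjections.

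First, I would observe that $\{0,1\}$ is finite, hence trivially sequentially compact: any sequence takes at most two values, so some value is attained infinitely often, giving a constant (thus convergent) subsequence. From Diagram~1 (the arrow labeled $1$) every sequentially compact space is \ssp; alternatively, the conclusion of Definition~\ref{def:sequential:seq:psc} follows directly from sequential compactness because one may select arbitrary points $x_n\in U_n$ and then extract a convergent subsequence of $\{x_n:n\in\N\}$.

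Next I would apply Corollary~\ref{product}, which says that an arbitrary product of \ssp\ spaces is \ssp. Applied to $X_i=\{0,1\}$ for $i\in\kappa$, this yields that $\{0,1\}^\kappa$ is \ssp. Now let $Y$ be an arbitrary dyadic space, so there is a continuous surjection $f:\{0,1\}^\kappa\to Y$. By Proposition~\ref{continuous_image}, continuous images of \ssp\ spaces are \ssp, and hence $Y$ is \ssp.

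For the parenthetical ``in particular'', I would invoke the classical theorem of Kuzminov that every compact (Hausdorff) topological group is a dyadic space; then the first part applies. The one point that is not entirely mechanical is citing Kuzminov's theorem, but since this is a standard black box rather than something that must be reproved here, no real obstacle arises — the whole argument is a two-line assembly of Corollary~\ref{product}, Proposition~\ref{continuous_image}, and the triviality that a finite discrete space is \ssp.
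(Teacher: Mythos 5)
Your proposal is correct and follows the paper's own proof essentially verbatim: $\{0,1\}$ is trivially \ssp, Corollary~\ref{product} gives \sspness\ of $\{0,1\}^\kappa$, Proposition~\ref{continuous_image} transfers it to any dyadic space, and the ``in particular'' clause rests on the classical fact (Kuzminov) that compact groups are dyadic, which the paper cites as \cite[Theorem 4.1.7]{AT}. No gaps.
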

\begin{proof}
Let $X$ be a dyadic space. Then $X$ is a continuous image of $\{0,1\}^\tau$ for a suitable cardinal $\tau$.
Since $\{0,1\}$ is obviously \ssp, Corollary~\ref{product} implies that $\{0,1\}^\tau$ is \ssp. Now \ssp ness of $X$ follows from Proposition~\ref{continuous_image}.

The ``in particular'' clause follows from the well-known fact that
compact groups are dyadic \cite[Theorem 4.1.7]{AT}.
\end{proof}

\begin{example}
The topological space $\beta \omega $ can be embedded in $D^\C,$ where $D=\{0,1\}$ is the two point discrete space.
By Corollary~\ref{dyadic:spaces}, $D^\C$ is \ssp\/. Since $\beta \omega $ is not \ssp\ by \cite[Example 2.9]{DPSW}, 
{\em a compact (and thus, closed) subspace of a \ssp\/ space is not necessarily \ssp\/}.
\end{example}

\begin{example}
Let $X$ be a countably compact space such that its square $X^{2}$ is not pseudocompact
\cite[Example 3.10.9]{E}. 
Since countably compact spaces are \strong\ and \strong\ spaces are pseudocompact (see Diagram 1),
this shows that  
{\em \strongness\ is not a productive property\/}. 
\end{example}

\section{\SSPness\ in topological groups}

One may expect that in the class of topological groups additional implications would hold that are not present for general topological spaces 
in Diagram 1.
The following result of this type was proved in \cite{AMPRT}:

\begin{fact}
Every pseudocompact group is \sp.
\end{fact}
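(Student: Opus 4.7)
The plan is to pass to the Raikov (two-sided) completion $\bar G$ of $G$ and transfer \sp ness from $\bar G$ back to $G$ via the Comfort--Ross $G_\delta$-density theorem. By Corollary~\ref{equivalence:for:disjoint} applied to the function $\mathbf{P}:\mathbb{U}_G\to\{0,1\}$ sending $U$ to $1$ iff the conclusion of Definition~\ref{def:sp} holds for $U$ (conditions~(a)--(c) of Proposition~\ref{truth:proposition} being routine for this $\mathbf{P}$), it suffices to verify the \sp\ property for a pairwise disjoint sequence $\{U_n:n\in\N\}$ of non-empty open subsets of $G$.

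Since $G$ is pseudocompact, $G$ is precompact, so its Raikov completion $\bar G$ is a compact topological group in which $G$ sits as a dense subgroup; moreover, by the Comfort--Ross theorem, $G$ is $G_\delta$-dense in $\bar G$. For each $n$ fix an open set $\tilde U_n\subseteq\bar G$ with $\tilde U_n\cap G=U_n$. The compact group $\bar G$ is dyadic, hence \ssp\ by Corollary~\ref{dyadic:spaces}, and thus in particular \sp. Applied to $\{\tilde U_n\}$ this yields an infinite $J\subseteq\N$ and a point $y\in\bar G$ such that $\{n\in J:W\cap\tilde U_n=\emptyset\}$ is finite for every open $\bar G$-neighborhood $W$ of $y$; by density of $G$ in $\bar G$, the same statement holds with $U_n=\tilde U_n\cap G$ in place of $\tilde U_n$.

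The main obstacle is the transfer from the witness $y\in\bar G$ to a witness $x\in G$, since $y$ need not lie in $G$. The plan is to select a decreasing sequence of open $\bar G$-neighborhoods $W_0\supseteq W_1\supseteq\cdots$ of $y$ together with indices $n_0<n_1<\cdots$ in $J$ satisfying $W_k\cap\tilde U_{n_m}\neq\emptyset$ for all $k\leq m$, then invoke Comfort--Ross to pick $x\in G\cap\bigcap_m W_m$, and finally to verify that $x$ together with $J'=\{n_m:m\in\N\}$ witnesses \sp\ for $G$. The delicate point is this last verification: an arbitrary $G$-neighborhood of $x$ is the trace of some $\bar G$-open set containing $x$, and one must relate such traces back to the $W_m$'s. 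This is accomplished via the group-theoretic structure of $\bar G$---left-translations, and the uniformity on $\bar G$ induced from $G$---and is where the hypothesis "topological \emph{group}" is essentially used; the analogous statement fails for arbitrary pseudocompact spaces.
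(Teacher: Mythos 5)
The paper does not prove this statement at all: it is quoted as a known Fact with a citation to \cite{AMPRT}, so the only comparison available is with the standard argument from the literature. Your overall strategy (pass to the compact Raikov completion $\bar G$, use Corollary~\ref{dyadic:spaces} there, and bring the witness back to $G$ via Comfort--Ross $G_\delta$-density) uses the right ingredients, but the proof has a genuine gap exactly at the step you yourself flag as ``delicate'', and the mechanism you propose for that step does not work as stated. Having found $y\in\bar G$ and an infinite $J$ from \sp ness of $\bar G$, you choose decreasing neighborhoods $W_0\supseteq W_1\supseteq\cdots$ of $y$ and take $x\in G\cap\bigcap_m W_m$. But the sets $W_m$ form a neighborhood base at $y$, not at $x$: an arbitrary open $\tilde V\ni x$ in $\bar G$ need not contain any $W_m$, and the set $\bigcap_m W_m$ is in general a large $G_\delta$ set in which $x$ can sit far from $y$. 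So from ``$W_k$ meets $\tilde U_{n_m}$ for $k\le m$'' nothing follows about $\tilde V\cap\tilde U_{n_m}$, and the final verification --- which is where all the content of the theorem lies --- is missing. The appeal to ``left-translations and the uniformity'' is not a proof; if $x=yh$, translating a neighborhood of $x$ back to $y$ moves the sets $\tilde U_n$ to $\tilde U_n h$, and there is no reason these still meet the translated neighborhood.

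The standard way to close this gap requires machinery you have not set up. One chooses points $z_n\in\tilde U_n$ and a closed normal $G_\delta$-subgroup $N$ of the compact group $\bar G$ small enough that $z_nN\subseteq\tilde U_n$ for all $n$ (possible because every neighborhood of the identity in a compact group contains an invariant one); one then replaces each $\tilde U_n$ by a non-empty \emph{$N$-saturated} open subset. The quotient $\bar G/N$ is compact metrizable, hence sequentially compact, so the images of the $z_n$ have a convergent subsequence with limit $\bar z$; the fibre $\pi^{\leftarrow}(\bar z)$ is a non-empty $G_\delta$-set, hence meets $G$ by Comfort--Ross, yielding $x\in G$. Only now does the translation argument succeed: for an open $\tilde V\ni x$, openness of $\pi$ gives $\tilde V\cap z_{n_k}N\not=\emptyset$ for all large $k$, and the $N$-saturation of the (shrunken) $\tilde U_{n_k}$ turns this into $\tilde V\cap\tilde U_{n_k}\not=\emptyset$. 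Note that in this argument one applies sequential compactness of the metrizable quotient directly rather than the abstract \sp ness of $\bar G$; the witness $y$ produced by your second paragraph is never compatible with a subgroup $N$ chosen afterwards, which is why your transfer step cannot be repaired without reorganizing the proof along these lines.
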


In this section we establish another result of the same flavour.

\begin{definition}
A topological space $X$ is said to be {\em $\omega $-bounded\/} if the closure of any countable subset of $X$ is compact. 
\end{definition}

\begin{theorem}\label{omega_group_are_ssp}
Every $\omega $-bounded group is \ssp\/.
\end{theorem}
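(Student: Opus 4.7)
The plan is to reduce the problem to Proposition~\ref{cofinal_are_ssp}: it suffices to show that every countable subset of $G$ is contained in a \ssp\ subspace of $G$.

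Given a countable subset $A\subseteq G$, I would consider the subgroup $\gp{A}$ generated by $A$, which is at most countable. By $\omega$-boundedness of $G$, its closure $H=\Cl_G(\gp{A})$ is compact. Moreover, $H$ is a subgroup of $G$, since the closure of a subgroup in a topological group is again a subgroup. Thus $H$ is a compact topological \emph{group} containing $A$.

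Next, I would appeal to the classical fact that every compact topological group is dyadic (Theorem 4.1.7 in~\cite{AT}, already cited inside Corollary~\ref{dyadic:spaces}). Combined with Corollary~\ref{dyadic:spaces}, this yields that $H$ is \ssp. Hence every countable subset $A\subseteq G$ sits inside a \ssp\ subspace of $G$, and Proposition~\ref{cofinal_are_ssp} concludes that $G$ itself is \ssp.

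I do not foresee a serious obstacle: the argument simply chains together three ingredients already available in the paper ($\omega$-boundedness to secure compactness, dyadicity of compact groups to secure \sspness, and the ``cofinal \ssp\ subspaces imply \ssp'' principle). The only point that requires care is to pass from the countable set $A$ first to the generated subgroup $\gp{A}$ and only then to take the closure, since what is needed is a compact \emph{group} structure (to invoke dyadicity and hence Corollary~\ref{dyadic:spaces})---merely observing that $\Cl_G(A)$ is compact would not suffice.
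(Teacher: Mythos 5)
Your argument is correct and is essentially identical to the paper's own proof: both pass from the countable set to the (countable) subgroup it generates, take its closure to obtain a compact group, invoke dyadicity of compact groups via Corollary~\ref{dyadic:spaces} to get \sspness, and conclude with Proposition~\ref{cofinal_are_ssp}. No changes are needed.
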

\proof
Suppose that $X$ is an $\omega $-bounded group. 
Let $S$ be a countable subset of $X.$ The subgroup $Z$ generated by $S$ is countable. Hence,
$Y=$ cl$_X(Z)$ is a compact group.  By Corollary~\ref{dyadic:spaces},
$Y$ is \ssp.  
By Proposition~\ref{cofinal_are_ssp},  $X$ is \ssp.
\endproof

In topological groups we have the following diagram. 
\begin{center} \medskip\hspace{1em}\xymatrix{
\text{compact}\ar[r]&  \omega \text{-bounded}\ar@{=>}^5[r] \ar[d]_6&\text{\ssp}\ar[d]_7\\
&\text{countably compact}\ar[r]^2&\text{\strong}\ar[d]_4\\
&\text{\sp}\ar@{<=>}[r] & \text{pseudocompact}
  }
\label{Diagram:2}
\end{center}
\begin{center}
Diagram 2.
\end{center}

In this diagram, double arrows denote implications that hold only in the class of topological groups and do not hold for general topological spaces.

\begin{example}
\label{ex:5.4}
By Corollary~\ref{dyadic:spaces}, the Cantor cube $D^\C$ is \ssp. 
By \cite[Theorem 3.10.33]{E}, $D^\C$ is not sequentially compact. 
Therefore, the compact group $G=D^\C$ is \ssp\ but not sequentially pseudocompact.
Hence, arrow 1 is not reversible even for compact (Abelian) groups.
\end{example}

\begin{example}\label{ex:1}
There is a pseudocompact group $G$ which is not \strong\
\cite[Example 2.4]{GT}. 
By \cite[Proposition 1.10]{AMPRT}, $G$ is \sp. Therefore,
arrows~3 and~4 of Diagram~1 are not reversible even for topological groups.
\end{example}

If $H$ is a pseudocompact group, then $H$ is a $G_\delta $-dense subgroup of its Raikov completion $\rho H.$
By \cite[Theorem 3.7.16]{AT}, $\rho H$ is compact and by Corollary~\ref{dyadic:spaces}, $\rho H$ is \ssp. Hence,
Example~\ref{ex:1} proves that \ssp ness is not hereditary with respect to $G_\delta $-dense subgroups.

\begin{example}
By the Hewitt-Marczewski-Pondiczery theorem, $D^\C$ is separable. Therefore, we can fix a countable dense subset $S$ of
$D^\C$. By a standard closing off argument, we can construct a 
countably compact subgroup $G$ of $D^\C$ such that $S\subseteq G$ and $|G|=\C$. Since $|D^\C|=2^\C>\C$, we conclude that $G$ is a proper subgroup of $D^\C$. Since $S$ is dense in $D^\C$, this implies that the closure of the countable set $S$ in $G$ is not compact. Thus, $G$ is not $\omega$-bounded.
This example shows that arrow~6 is not reversible even for topological groups.
\end{example}

\begin{example}
\label{CH:example}
Under CH, there is a countably compact group $G$ without convergent sequences. Then, $G$ is not \ssp. 
This example proves that, 
assuming CH, arrow~7 is
not reversible even for topological groups.  
\end{example}

We do not know if an additional set-theoretic assumption in this example can be omitted; see Question \ref{ZFC:question} below.

In Example~\ref{ex:old:5.6} below we shall show that arrow~2 of Diagram~1 and arrow~5 of Diagram~2 are not reversible in the class of topological groups.

\section{Building dense $\V$-independent sets in $\C$-powers, for a fixed variety $\V$}

We start 
this section 
with
the following general lemma.

 \begin{lemma}
\label{convergent_closed_sequences:0}
Let $H$ be a separable metric space.
Then for every uncountable subset $Y$ of $H$ one can find 
a set $X\subseteq Y$ such that
$X=\bigcup_{\alpha<|Y|} S_\alpha$, where:
\begin{itemize}
\item[(i)]  each $S_\alpha$ is a
countably infinite compact set with a single non-isolated point
(that is, $S_\alpha$ is a non-trivial convergent sequence taken together with its limit),
   and 
\item[(ii)]   
   $S_\alpha\cap S_\beta=\emptyset$ whenever $\alpha\not=\beta$.
\end{itemize}
\end{lemma}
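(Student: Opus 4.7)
The proof will be a transfinite recursion of length $\kappa=|Y|$, using two standard facts about the separable metric space $H$:
(a) $H$ is hereditarily Lindel\"of, so for any uncountable $Z\subseteq H$ the set of points of $Z$ that fail to be condensation points of $Z$ is countable (cover such points by countably many open sets, each meeting $Z$ in only a countable set);
(b) around any condensation point $y$ of a set $Z\subseteq H$ one can extract, by induction on $n$, a one-to-one sequence in $Z\setminus\{y\}$ converging to $y$, since every neighborhood of $y$ contains uncountably many points of $Z$, and metric neighborhoods of diameter $<1/n$ form a neighborhood base at $y$.

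First I would replace $Y$ by the set $Y_0$ of those $y\in Y$ that are condensation points of $Y$; by (a), $|Y\setminus Y_0|\le\omega$, so $|Y_0|=\kappa$, and every point of $Y_0$ is automatically a condensation point of $Y_0$ itself. Then, assuming $S_\beta$ has been constructed for all $\beta<\alpha<\kappa$, I set $B_\alpha=\bigcup_{\beta<\alpha}S_\beta$ and $Z_\alpha=Y_0\setminus B_\alpha$. Since $\kappa\ge\aleph_1$ is a cardinal and $|\alpha|<\kappa$, one has $|B_\alpha|\le|\alpha|\cdot\omega=\max(|\alpha|,\omega)<\kappa$, so $|Z_\alpha|=\kappa$. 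Applying (a) to $Z_\alpha$ furnishes some $y_\alpha\in Z_\alpha$ that is a condensation point of $Z_\alpha$, and (b) then yields a one-to-one sequence $\{z^\alpha_n:n\in\N\}\subseteq Z_\alpha\setminus\{y_\alpha\}$ converging to $y_\alpha$. Setting $S_\alpha=\{y_\alpha\}\cup\{z^\alpha_n:n\in\N\}$ gives a countably infinite compact set with a single non-isolated point, contained in $Z_\alpha$ and therefore disjoint from each earlier $S_\beta$.

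Taking $X=\bigcup_{\alpha<\kappa}S_\alpha\subseteq Y$ then produces the required decomposition satisfying (i) and (ii). The only subtle point is the uniform cardinality estimate $|B_\alpha|<\kappa$; this works even when $\kappa$ is singular precisely because $|\alpha|<\kappa$ forces $\max(|\alpha|,\omega)<\kappa$. Beyond this arithmetic bookkeeping, every individual step is just a routine selection inside an uncountable subset of a separable metric space, so I do not foresee any further obstacle.
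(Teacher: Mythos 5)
Your proof is correct and follows essentially the same route as the paper's: a transfinite recursion of length $|Y|$ in which the union of the previously chosen sets $S_\beta$ has cardinality $<|Y|$, so the remainder is still uncountable and one extracts from it an injective sequence converging to one of its points. The only cosmetic difference is that you locate the limit point as a condensation point via hereditary Lindel\"ofness (after pruning to $Y_0$), whereas the paper simply notes that an uncountable second-countable subspace cannot be discrete and takes any non-isolated point; both yield the same local extraction step.
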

\begin{proof}
First, note that every uncountable subset $Z$ of $H$ contains a 
countably infinite compact set $S$ with a single non-isolated point. Indeed, since $Z$ is a subspace of a separable metric space 
$H$, it has a countable base. On the other hand, $Z$ is uncountable, so it cannot be discrete. Therefore, $Z$ contains a non-isolated point $z$. Therefore, by induction on $n\in\N$, we can choose a point $x_n\in U(z,1/n)\setminus \{z, x_0,\dots,x_{n-1}\}$, where $U(z,1/n)$ is the ball centered at $z$ of radius $1/n$. Now $S=\{x_n:n\in\N\}\cup\{z\}$ is the desired set.

By transfinite induction on $\alpha<|Y|$, we shall select
a countably infinite subset $S_\alpha$ of $Y$ with a single non-isolated point such that $S_\alpha\cap \bigcup_{\beta<\alpha} S_\beta=\emptyset$.

For $\alpha=0$, we use the above observation (with $Z=Y$) to choose the required $S_0$.

Suppose now that the ordinal $\alpha$ satisfies $0<\alpha<|Y|$
and the family $\{S_\beta:\beta<\alpha\}$ has already been chosen.
Note that $|\bigcup_{\beta<\alpha} S_\beta|\le\alpha+\omega<|Y|$,
so the set $Z=Y\setminus \bigcup_{\beta<\alpha} S_\beta$
satisfies $|Z|=|Y|$; in particular, $Z$ is uncountable. Applying
the above observation to this $Z$, we get the desired $S_\alpha$. 
\end{proof}

A
{\em variety of groups} 
is
a class of groups closed under Cartesian products, subgroups and quotients \cite{Neu}.

\begin{definition}
Let $\V$ be a non-trivial variety of groups.
A subset $X$ of a group $G$ is said to be {\em $\V$-independent} if 
\begin{enumerate}
\item[(i)] $\gp{X}\in\V,$ and
\item[(ii)] for each map $f:X\to H\in\V$ there exists a unique homomorphism $\tilde{f}:\gp{X}\to H$ extending $f.$
\end{enumerate}
\end{definition}

We shall need the following useful fact
\cite[Lemmas 2.3 and 2.4]{DiS}.

\begin{lemma}\label{lemma2.3_DiS}
\label{lemma2.4_DiS}
Let $\V$ be a variety of groups and $X$ be a subset of a group $G$. 
\begin{itemize}
\item[(i)]
$X$   is $\V$-independent if and only if
each finite subset of $X$ is $\V$-independent. 
\item[(ii)] If $H$ is a group and
$f:G\to H$ is a homomorphism such that
$f(X)$ is a $\V$-independent subset of $H$, $\langle X\rangle\in\V$
and $f\restriction_X:X\to H$ is an injection, then $X$ is a $\V$-independent subset of $G$. 
\end{itemize}
\end{lemma}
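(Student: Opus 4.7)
The plan is to prove (i) by exploiting Birkhoff's HSP theorem (a variety is closed under subgroups, products, and quotients, and is equivalently defined by a set of identities) together with the observation that a homomorphism out of $\gp{X}$ is determined by its values on $X$. Part (ii) will follow from a simple transport of the universal property of $f(X)$ back along the injection $f\restriction_X$.

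For (i), the forward implication is essentially by restriction: if $F\subseteq X$ is finite, then $\gp{F}$ is a subgroup of $\gp{X}\in\V$, so $\gp{F}\in\V$; given any $f:F\to H\in\V$, extend $f$ to $f':X\to H$ by sending $X\setminus F$ to $e_H$, apply $\V$-independence of $X$ to obtain a unique homomorphism $\tilde{f'}:\gp{X}\to H$ extending $f'$, and restrict to $\gp{F}$. Uniqueness is automatic since a homomorphism out of $\gp{F}$ is pinned down by its values on $F$. For the reverse implication, assume every finite subset of $X$ is $\V$-independent. Any finitely generated subgroup of $\gp{X}$ lies inside some $\gp{F}$ with $F\subseteq X$ finite, and $\gp{F}\in\V$ by hypothesis; since variety membership is detected on finitely generated subgroups (by Birkhoff), $\gp{X}\in\V$. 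For the universal property, given $f:X\to H\in\V$, for each $g\in\gp{X}$ choose a finite $F\subseteq X$ with $g\in\gp{F}$ and set $\tilde{f}(g)=h_F(g)$, where $h_F:\gp{F}\to H$ is the unique homomorphism extending $f\restriction_F$. Consistency across different choices $F,F'$ follows by applying the uniqueness clause to $F\cup F'$, and the resulting map $\tilde{f}$ is then a well-defined homomorphism extending $f$.

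For part (ii), given any $g:X\to K\in\V$, injectivity of $f\restriction_X$ lets us define $g':f(X)\to K$ unambiguously by $g'(f(x))=g(x)$. By $\V$-independence of $f(X)$, this extends to a unique homomorphism $\tilde{g'}:\gp{f(X)}\to K$; composing with the restriction $f\restriction_{\gp{X}}:\gp{X}\to\gp{f(X)}$ yields a homomorphism $\gp{X}\to K$ sending every $x\in X$ to $g(x)$, and uniqueness of such an extension is automatic. The only mildly nontrivial step throughout is the reverse direction of (i), where one must invoke the local character of variety membership to upgrade the hypothesis on finite subsets of $X$ to $\gp{X}\in\V$; the rest amounts to routine universal-property bookkeeping.
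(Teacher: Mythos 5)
Your proof is correct. Note, however, that the paper itself gives no proof of this lemma: it is quoted verbatim from Dikranjan--Shakhmatov (\cite[Lemmas 2.3 and 2.4]{DiS}), so there is no argument in the text to compare yours against. Your argument stands on its own: the forward direction of (i) and all of (ii) are, as you say, routine universal-property bookkeeping (extending a map on a finite subset by the identity of $H$, respectively transporting the extension property back along the injection $f\restriction_X$, using that $f(\gp{X})=\gp{f(X)}$), and uniqueness is automatic throughout because a homomorphism out of $\gp{X}$ is determined by its values on a generating set. The one genuinely non-trivial ingredient is the reverse direction of (i), where you need $\gp{X}\in\V$ from the hypothesis that each $\gp{F}$, $F\subseteq X$ finite, lies in $\V$; your appeal to Birkhoff's theorem (varieties are equationally defined, and a group satisfies a law iff all its finitely generated subgroups do) is the standard and correct way to get this from the paper's closure-property definition of a variety, and the compatibility argument via $F\cup F'$ that assembles the local extensions $h_F$ into a single homomorphism $\tilde{f}$ is sound.
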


\begin{definition}
Let $\V$ be a non-trivial variety of groups.
For a group $G$, the cardinal
	$$r_\V(G)=\sup\{|X|:X \textrm{ is a } \V \textrm{-independent subset of } G\}$$
	is  called 
	the {\em $\V$-rank} of $G$. 
\end{definition}

\begin{lemma}\label{convergent_closed_sequences}
Let $\V$ be a non-trivial variety. If $H$ is a separable metric group such that $r_\V(H)\geq \omega$, 
then there exists a $\V$-independent subset
$X\subseteq H^{\omega}$ such that  $X$
contains $\C$ many pairwise disjoint convergent closed sequences in $X$.
\end{lemma}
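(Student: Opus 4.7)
The strategy is to first produce a $\V$-independent subset $Y\subseteq H^{\omega}$ of cardinality $\C$ and then apply Lemma~\ref{convergent_closed_sequences:0} (which applies to $H^{\omega}$, since a countable product of separable metric spaces is again separable metric) to extract inside $Y$ a set $X=\bigcup_{\alpha<\C}S_{\alpha}$ that is the disjoint union of $\C$ many countably infinite compact subsets with a unique non-isolated point, i.e., $\C$ many pairwise disjoint convergent sequences together with their limits. Each such $S_{\alpha}$ is compact, hence closed in $H^{\omega}$ and thus in $X$, yielding ``convergent closed sequences in $X$''. Finally, since every finite subset of $X$ is a finite subset of the $\V$-independent set $Y$, Lemma~\ref{lemma2.3_DiS}(i) gives that $X$ itself is $\V$-independent.

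To construct $Y$, use $r_{\V}(H)\ge\omega$ to fix a countably infinite $\V$-independent set $A=\{a_{n}:n\in\N\}\subseteq H$, and fix an almost disjoint family $\{T_{\alpha}:\alpha<\C\}$ of infinite subsets of $\N$. Let $\sigma_{\alpha}\colon\N\to T_{\alpha}$ be the strictly increasing enumeration and set
\[
y_{\alpha}\in H^{\omega},\qquad y_{\alpha}(n)=a_{\sigma_{\alpha}(n)}\ \ (n\in\N),\qquad Y=\{y_{\alpha}:\alpha<\C\}.
\]
The $y_{\alpha}$ are pairwise distinct because the $T_{\alpha}$ are, so $|Y|=\C$. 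For an arbitrary finite $\{y_{\alpha_{1}},\dots,y_{\alpha_{k}}\}\subseteq Y$, almost disjointness provides $N\in\N$ such that the tails $T_{\alpha_{i}}\setminus\{0,\dots,N\}$ are pairwise disjoint; any $m$ with $\sigma_{\alpha_{i}}(m)>N$ for all $i\le k$ then has $\sigma_{\alpha_{1}}(m),\dots,\sigma_{\alpha_{k}}(m)$ pairwise distinct. Hence the coordinate projection $\pi_{m}\colon H^{\omega}\to H$ sends $\{y_{\alpha_{1}},\dots,y_{\alpha_{k}}\}$ injectively to the $\V$-independent set $\{a_{\sigma_{\alpha_{1}}(m)},\dots,a_{\sigma_{\alpha_{k}}(m)}\}\subseteq A$. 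Since each $y_{\alpha_{i}}$ has all its entries in $\langle A\rangle\in\V$, the subgroup $\langle y_{\alpha_{1}},\dots,y_{\alpha_{k}}\rangle$ sits inside the $\V$-group $\langle A\rangle^{\omega}$ and therefore belongs to $\V$; Lemma~\ref{lemma2.3_DiS}(ii) applied to $f=\pi_{m}$ then delivers $\V$-independence of $\{y_{\alpha_{1}},\dots,y_{\alpha_{k}}\}$. By Lemma~\ref{lemma2.3_DiS}(i) the whole set $Y$ is $\V$-independent.

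The main conceptual hurdle, and the reason one cannot simply use the naive ``indicator'' construction $y_{\alpha}(n)=a_{n}$ for $n\in T_{\alpha}$ and $y_{\alpha}(n)=e$ otherwise, is that for a non-abelian variety $\V$ the coordinate-wise evaluation of a non-trivial word can produce expressions such as commutators of $a_{n}$ with $e$, which vanish and give no contradiction to $\V$-independence. Re-indexing via the shifts $\sigma_{\alpha}$ is the precise device needed to guarantee a single coordinate $m$ at which all $k$ entries are \emph{distinct} generators taken from the $\V$-independent set $A$, which is exactly the configuration that triggers Lemma~\ref{lemma2.3_DiS}(ii).
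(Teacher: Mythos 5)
Your proof has the same skeleton as the paper's: produce a $\V$-independent subset $Y\subseteq H^\omega$ of cardinality $\C$, carve out $X=\bigcup_{\alpha<\C}S_\alpha\subseteq Y$ via Lemma~\ref{convergent_closed_sequences:0} (noting that $H^\omega$ is separable metric and that the compact sets $S_\alpha$ are automatically closed), and conclude $\V$-independence of $X$ from Lemma~\ref{lemma2.3_DiS}(i). The only difference is that the paper obtains $Y$ by citing \cite[Lemma 4.1]{DiS}, whereas you build $Y$ explicitly from an almost disjoint family; your construction is in effect a proof of that cited lemma, and its core step --- finding a single coordinate $m$ at which a given finite subfamily projects injectively into the $\V$-independent set $A$, so that Lemma~\ref{lemma2.3_DiS}(ii) applies --- is correct, including the verification that $\gp{y_{\alpha_1},\dots,y_{\alpha_k}}$ lies in $\gp{A}^\omega\in\V$.

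One step needs repair: you ``use $r_\V(H)\ge\omega$ to fix a countably infinite $\V$-independent set $A\subseteq H$.'' The hypothesis only says that the supremum of the cardinalities of $\V$-independent subsets of $H$ is infinite, i.e., that $H$ contains finite $\V$-independent sets of unbounded size; a supremum need not be attained, and for a general variety there is no exchange property that would let you assemble an increasing chain of finite $\V$-independent sets into an infinite one inside $H$ itself. The standard fix stays inside $H^\omega$: choose $\V$-independent sets $X_n=\{x_{n,1},\dots,x_{n,n}\}\subseteq H$, define $b_k\in H^\omega$ by $b_k(n)=x_{n,k}$ for $n\ge k$ and $b_k(n)=e$ otherwise, and project a finite subfamily $\{b_{k_1},\dots,b_{k_m}\}$ to any coordinate $n\ge\max_i k_i$; Lemma~\ref{lemma2.3_DiS} then shows $A=\{b_k:k\in\N\}$ is a countably infinite $\V$-independent subset of $H^\omega$. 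Running your almost-disjoint construction over this $A$ produces $Y$ inside $(H^\omega)^\omega\cong H^\omega$, and the rest of your argument goes through verbatim. (Your closing remark about the ``naive indicator construction'' is tangential, and the stated reason is off: the real obstruction is that no $\V$-independent set contains the identity, so a coordinate at which some $y_\alpha$ projects to $e$ cannot be used in Lemma~\ref{lemma2.3_DiS}(ii); your re-indexing via $\sigma_\alpha$ does avoid exactly this.)
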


\proof
Since $H$ is a separable metric group, so is $H^\omega$.
By \cite[Lemma 4.1]{DiS},
the 
group $H^\omega$ has a $\V$-independent subset $Y$ of cardinality $\C$. Applying
Lemma~\ref{convergent_closed_sequences:0} to this $Y$, we can choose a subset $X$ of $Y$ as in the conclusion of this lemma.
Since $X$ is a subset of the $\V$-independent set $Y$, it is also 
$\V$-independent by Lemma~\ref{lemma2.3_DiS}~(i).
\endproof

\begin{theorem}\label{free_seq_pseudo}
Suppose that $H$ is a 
compact 
metric group with $r_\V(H)\geq \omega $. 
Then $H^\C$ contains a dense \ssp\ $\mathcal V$-independent subset of size continuum.
\end{theorem}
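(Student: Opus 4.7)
The plan is to construct $X$ by combining the $\V$-independent convergent sequences produced by Lemma~\ref{convergent_closed_sequences} with a diagonal argument ensuring density in $H^\C$, and then to establish \ssp\ via Proposition~\ref{cofinal_are_ssp}.

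First, I would fix a countable dense subset $D=\{d_n:n\in\N\}$ of $H^\C$ (which exists by the Hewitt--Marczewski--Pondiczery theorem applied to the separable compact metric group $H$) and partition $\C$ into $\C$ pairwise disjoint countably infinite blocks $\{J_\alpha:\alpha<\C\}$, yielding the identification $H^\C\cong\prod_{\alpha<\C}H^{J_\alpha}$ where each factor is a copy of $H^\omega$. Applying Lemma~\ref{convergent_closed_sequences} to $H$, I obtain a $\V$-independent subset $Y=\bigcup_{\alpha<\C}S_\alpha\subseteq H^\omega$ with each $S_\alpha$ a nontrivial convergent sequence together with its limit $s_\alpha^*$, the $S_\alpha$ being pairwise disjoint.

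Next I would define $X\subseteq H^\C$ to consist, for each $\alpha<\C$ and each $n\in\N$, of all elements whose restriction to $J_\alpha$ is a point of the transferred copy of $S_\alpha\cup\{s_\alpha^*\}$ (via a fixed identification $H^\omega\cong H^{J_\alpha}$) and whose restriction to $\C\setminus J_\alpha$ agrees with $d_n|_{\C\setminus J_\alpha}$. This gives $|X|=\C$ and embeds, for each pair $(\alpha,n)$, a convergent sequence together with its limit into $X$. Density in $H^\C$ follows: a basic open set $U$ depending on a finite coordinate set $F$ contains the element obtained by choosing $\alpha$ with $J_\alpha\cap F=\emptyset$ and $n$ with $d_n|_F$ inside $U|_F$.

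For the verification, $\V$-independence of $X$ is obtained via Lemma~\ref{lemma2.4_DiS}(ii) applied to the coordinate projection $H^\C\to\prod_\alpha H^{J_\alpha}$: on each $\alpha$-block the construction yields a copy of $S_\alpha\subseteq Y$, and the disjointness of the blocks together with Lemma~\ref{lemma2.4_DiS}(i) lets one patch together the local $\V$-independence of the $S_\alpha$'s into the global $\V$-independence of $X$. For \ssp, I would apply Proposition~\ref{cofinal_are_ssp}: any countable subset of $X$ involves only countably many pairs $(\alpha,n)$, and so lies inside a subspace of $H^\C$ homeomorphic to a countable product of compact metric spaces, which is itself compact metric and hence \ssp\ by Corollary~\ref{dyadic:spaces}.

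The main obstacle I anticipate is ensuring that the countable fragments of $X$ really form \ssp\ subspaces \emph{of $X$ itself} and not merely of $H^\C$: a sequence in $X$ cutting across distinct blocks or distinct $d_n$-translates can converge in $H^\C$ to a point such as some $d_n$ which is not a priori in $X$. The resolution requires either augmenting the construction to include such cross-block limits in $X$ (while preserving $\V$-independence via Lemma~\ref{lemma2.4_DiS}(ii)), or arguing that one may refine the choice of points in the \ssp\ definition to remain within a single pair $(\alpha,n)$, where convergence to the limit $(s_\alpha^*,d_n|_{\C\setminus J_\alpha})\in X$ is automatic. Reconciling this refinement with $\V$-independence and density is the technical heart of the proof.
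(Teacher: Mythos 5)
There are two genuine gaps here. First, the set $X$ you construct is not $\V$-independent. Fix $\alpha$, two distinct points $s,s'\in S_\alpha$, and two indices $n\neq n'$, and let $x_1,x_2,x_3,x_4\in X$ be the elements whose restrictions to $J_\alpha$ are $s,s,s',s'$ and whose restrictions to $\C\setminus J_\alpha$ are $d_n,d_{n'},d_n,d_{n'}$, respectively. In the Abelian case $x_1-x_2-x_3+x_4=0$, a non-trivial relation among four distinct elements of $X$, so already for $\V=\A$ your set fails to be $\V$-independent. The underlying problem is that the projection argument cannot work: projecting onto the $\alpha$-th block sends the elements of $X$ built from \emph{other} blocks to the various $d_{n}\restriction_{J_\alpha}$, which need not lie in any $\V$-independent set, so Lemma~\ref{lemma2.4_DiS}~(ii) does not apply, and $\V$-independence is not a local property that can be patched block by block. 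The paper avoids this by making the tail of each chosen point \emph{constant} across all coordinates outside a countable support set and equal to $r_{\alpha,n}$, a member of a single $\V$-independent family $R$ indexed injectively by the pair $(\alpha,n)$; projecting any finite subset of the constructed set to one fresh coordinate $\beta$ then maps it injectively into $R$, and Lemma~\ref{lemma2.4_DiS}~(ii) applies.

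Second, the \ssp\ verification is not completed, as you yourself note. Proposition~\ref{cofinal_are_ssp} requires every countable subset of $X$ to lie in a \ssp\ subspace \emph{of $X$}; a countable subset of your $X$ lies in a compact metrizable subspace of $H^\C$, but that subspace is not contained in $X$, so the proposition gives nothing. Moreover, for an arbitrary sequence of basic open sets $\{A_k:k\in\N\}$ your construction forces the selected points to come from varying pairs $(\alpha_k,n_k)$, and there is no mechanism forcing a subsequence of these points to converge to a point of $X$; neither of your two proposed fixes is worked out, and the first (adjoining all cross-block limit points) would further damage $\V$-independence. The paper's resolution is a bookkeeping device your outline is missing: enumerate \emph{all} countable sequences of basic open sets as $\{U_{\alpha,n}:n\in\N\}$ for $\alpha<\C$, and for the $\alpha$-th sequence plant a single sequence $\{y_{\alpha,n}:n\in\N\}$ with $y_{\alpha,n}\in U_{\alpha,n}$, obtained by choosing a convergent selection on the countably many coordinates occurring in the supports (using sequential compactness of the countable subproduct $L^{S_\alpha}$) and gluing it to $\{r_{\alpha,n}:n\in\N\}$ on all remaining coordinates. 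This single construction simultaneously yields density, a convergent subsequence with limit $y_{\alpha,\omega}$ inside the set for every countable family of basic open sets, and $\V$-independence. Your argument would need to be reorganized along these lines rather than repaired locally.
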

\proof
Define $L=H^\omega.$ 
By Lemma~\ref{convergent_closed_sequences},
there exists a $\mathcal V$-independent set 
$$
R=\{r_{\alpha ,n}:\alpha <\C,n\in\N\}\subseteq L
$$
such that 
\begin{equation}
\text{the sequence }
\{r_{\alpha ,n}:n\in\N\}
\text{ converges to }
r_{\alpha ,\omega}
\text{ for every }
\alpha<\C. 
\end{equation}

We shall call a subset $V$ of $L^\C$ a {\em basic open set\/} if $V=\prod_{\gamma<\C} V_\gamma$, where each $V_\gamma$ is an open subset of $L$ and the set $\mathrm{supp} (V)=\{\gamma<\C: V_\gamma\neq L\}$ is finite.
Let $\B$ be a base for $L^\C$ 
consisting of basic open subsets $L^\C$ such that $|\B|=\C$.
Let
$\mathcal U=[\B]^\omega.$ Since $|\mathcal U|=\C,$ 
we can enumerate 
$$ \mathcal U=\{\{U_{\alpha ,n}:n\in\N \}: \alpha <\C\},
$$
where $U_{\alpha,n}\in\B$ for all $\alpha<\C$ and $n\in\N$.

Let $\alpha <\C. $  
Since each $U_{\alpha ,n}$ is a basic open set in $L^\C$,
the set $\mathrm{supp}(U_{\alpha ,n})$ is finite, and so 
the set
\begin{equation}
\label{eq:S_alpha}
S_\alpha =\bigcup _{n\in\N }\mathrm{supp}(U_{\alpha ,n})
\end{equation}
is at most countable.
Let $p_\alpha:L^\C\to L^{S_\alpha}$ be the natural projection.

Fix $n\in\N$. Since $U_{\alpha ,n}$ is a non-empty basic open subset of $L^\C$ and $\mathrm{supp}(U_{\alpha ,n})\subseteq S_\alpha$ by \eqref{eq:S_alpha}, the set $W_n=p_\alpha(U_{\alpha ,n})$
is a non-empty open subset of $L^{S_\alpha}$ such that
\begin{equation}
\label{new:eq:7}
U_{\alpha ,n}=p_\alpha^{\leftarrow}(W_n).
\end{equation}

Since $S_\alpha$ is countable and $L$ is a compact metric space,
the space $L^{S_\alpha}$ is compact and metric. In particular,
$L^{S_\alpha}$ is sequentially compact, and so also 
\ssp; see Diagram 1.
Therefore,
we can choose a point $x_{\alpha,n}\in W_n$ for every $n\in \N$
in such a way that a suitable subsequence $\{x_{\alpha,n}:n\in J_\alpha\}$ of the sequence $\{x_{\alpha,n}:n\in\N\}$
converges to some element $x_{\alpha,\omega}\in L^{S_\alpha}$.

For every $n\in\omega+1$ ,
define $y_{\alpha,n}\in L^\C$ by 
\begin{equation}
\label{eq:5:old}
y_{\alpha ,n}(\beta)=
\left\{\begin{array}{ll}
x_{\alpha ,n}(\beta) & \mbox{if }\beta\in S_\alpha \\
r_{\alpha ,n}& \mbox{if }\beta\in \C\setminus S _\alpha
\end{array}
\right.
\ \ \ 
\mbox{for every }
\beta<\C.
\end{equation}

\begin{claim}
\label{claim:1}
For every $\alpha<\C$, the following holds:
\begin{itemize}
\item[(i)] $y_{\alpha ,n}\in U_{\alpha ,n}$ for every $n\in\N$;
\item[(ii)] the sequence $\{y_{\alpha ,n}:n\in J_\alpha \}$ converges to $y_{\alpha ,\omega }.$ 
\end{itemize}
\end{claim}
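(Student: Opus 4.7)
The plan is to verify both parts directly from the definitions, unpacking what $y_{\alpha,n}$ looks like under the projection $p_\alpha$ and then using the fact that convergence in the product topology on $L^{\C}$ is coordinatewise.

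For (i), I would fix $\alpha<\C$ and $n\in\N$ and show that $p_\alpha(y_{\alpha,n})=x_{\alpha,n}$. Indeed, for any $\beta\in S_\alpha$, definition \eqref{eq:5:old} gives $y_{\alpha,n}(\beta)=x_{\alpha,n}(\beta)$, so the restriction of $y_{\alpha,n}$ to $S_\alpha$ coincides with $x_{\alpha,n}$. Since $x_{\alpha,n}\in W_n$ by construction, we get $p_\alpha(y_{\alpha,n})\in W_n$, which by \eqref{new:eq:7} means $y_{\alpha,n}\in p_\alpha^{\leftarrow}(W_n)=U_{\alpha,n}$.

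For (ii), I would show coordinatewise convergence of $\{y_{\alpha,n}:n\in J_\alpha\}$ to $y_{\alpha,\omega}$ by splitting into two cases according to the coordinate $\beta<\C$. If $\beta\in S_\alpha$, then by \eqref{eq:5:old} we have $y_{\alpha,n}(\beta)=x_{\alpha,n}(\beta)$ for all $n\in\omega+1$, and since $\{x_{\alpha,n}:n\in J_\alpha\}$ converges to $x_{\alpha,\omega}$ in $L^{S_\alpha}$, the projection to the $\beta$-th coordinate yields $x_{\alpha,n}(\beta)\to x_{\alpha,\omega}(\beta)$ along $J_\alpha$. If instead $\beta\in\C\setminus S_\alpha$, then \eqref{eq:5:old} gives $y_{\alpha,n}(\beta)=r_{\alpha,n}$ for all $n\in\omega+1$; since $\{r_{\alpha,n}:n\in\N\}$ converges to $r_{\alpha,\omega}$, so does its subsequence indexed by $J_\alpha$.

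Honestly, no step here is really an obstacle; the whole claim is a bookkeeping exercise designed to record that the element $y_{\alpha,n}$ built by splicing $x_{\alpha,n}$ on the ``support'' coordinates with $r_{\alpha,n}$ elsewhere inherits both desirable properties of its two ingredients. The only thing to be careful about is to note that the definition of $y_{\alpha,n}$ agrees on $S_\alpha$ with $x_{\alpha,n}$ (giving (i) and the first case of (ii)) and on $\C\setminus S_\alpha$ with the constant-free convergent sequence $r_{\alpha,n}$ (giving the second case of (ii)), and that coordinatewise convergence suffices because $L^{\C}$ carries the product topology.
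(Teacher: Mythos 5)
Your proposal is correct and follows essentially the same route as the paper: part (i) via $p_\alpha(y_{\alpha,n})=x_{\alpha,n}\in W_n$ and \eqref{new:eq:7}, and part (ii) by coordinatewise convergence split into the cases $\beta\in S_\alpha$ and $\beta\in\C\setminus S_\alpha$. Nothing is missing.
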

\begin{proof}
Fix an arbitrary $\alpha<\C$.

(i)
By \eqref{eq:5:old}, 
$p_\alpha(y_{\alpha,n})=x_{\alpha,n}\in W_n$, so
$
y_{\alpha ,n}\in p_\alpha^{\leftarrow}(W_n)=U_{\alpha ,n}
$
by \eqref{new:eq:7}.

(ii) It suffices to show that the sequence 
$\{y_{\alpha ,n}(\beta):n\in J_\alpha \}$ converges to $y_{\alpha ,\omega }(\beta)$
for every $\beta<\C$. 
In checking this,we consider two cases.

\smallskip
{\em Case 1\/}.  By our choice of the set $J_\alpha$, 
the sequence $\{x_{\alpha,n}:n\in J_\alpha\}$
converges to $x_{\alpha,\omega}\in L^{S_\alpha}$ in $L^{S_\alpha}$.
Since $\beta\in S_\alpha$,
this implies that the sequence 
$\{y_{\alpha ,n}(\beta):n\in J_\alpha \}$ converges to $y_{\alpha ,\omega }(\beta)$.

\smallskip
{\em Case 2\/}. $\beta\in \C\setminus S_\alpha$.
By \eqref{eq:5:old}, 
$y_{\alpha,n}(\beta)=r_{\alpha,n}$ for every $n\in\omega+1$. 
Now it remains only to note that, by our construction,
the sequence $\{r_{\alpha,n}:n\in\N\}$ converges to $r_{\alpha,\omega}$.
\end{proof}
\begin{claim}
\label{claim:2}
$Y=\{y_{\alpha ,n}:\alpha<\C ,n\in \omega+1 \}\subseteq L^\C$ is \ssp\
and dense in $L^\C$.
\end{claim}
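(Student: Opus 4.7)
The plan is to establish density of $Y$ in $L^\C$ and then \sspness\ separately, with both steps leaning on the key feature of the construction: every countable subfamily of $\B$ equals $\{U_{\alpha,n}:n\in\N\}$ for some $\alpha<\C$, and Claim~\ref{claim:1} already produces a point of $Y$ inside each $U_{\alpha,n}$ together with a convergent subsequence of the resulting sequence.

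For density, given any nonempty $U\in\B$, I will embed $U$ into a countable family $\mathcal{S}\in[\B]^\omega$ (possible as $|\B|=\C$), invoke the enumeration of $\mathcal{U}$ to get $\alpha<\C$ with $\{U_{\alpha,n}:n\in\N\}=\mathcal{S}$, and then use Claim~\ref{claim:1}(i) to observe that $y_{\alpha,n_0}\in U_{\alpha,n_0}=U$ for the index $n_0$ with $U_{\alpha,n_0}=U$. Since $y_{\alpha,n_0}\in Y$ by construction, this yields $Y\cap U\neq\emptyset$.

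For \sspness, by Proposition~\ref{disjoint:condition:for:ssp} it suffices to treat a pairwise disjoint sequence $\{V_n:n\in\N\}$ of nonempty open subsets of $Y$. Using that $\B$ is a base of $L^\C$, I choose $B_n\in\B$ with $\emptyset\neq B_n\cap Y\subseteq V_n$; the disjointness of the $V_n$ together with the nonemptiness of each $B_n\cap Y$ forces the $B_n$ to be pairwise distinct. Arranging both enumerations to be bijective (which we may), I obtain $\alpha<\C$ and a bijection $\tau:\N\to\N$ with $U_{\alpha,n}=B_{\tau(n)}$, and then set $x_k=y_{\alpha,\tau^{-1}(k)}$. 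By Claim~\ref{claim:1}(i), $x_k\in U_{\alpha,\tau^{-1}(k)}=B_k$, so $x_k\in B_k\cap Y\subseteq V_k$ for every $k\in\N$. Along the infinite index set $K=\tau(J_\alpha)$, the subsequence $(x_k)_{k\in K}$ is merely a reindexing of $\{y_{\alpha,m}:m\in J_\alpha\}$, which converges to $y_{\alpha,\omega}$ by Claim~\ref{claim:1}(ii); reindexing preserves convergence of a sequence, so this is the required convergent subsequence.

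The main subtlety is the mismatch between the enumeration $(B_n)$ inherited from the prescribed open sets and the enumeration $(U_{\alpha,n})$ fixed by the listing of $\mathcal{U}$. The bookkeeping bijection $\tau$, together with the elementary observation that any permutation of a convergent sequence remains convergent to the same limit, handles this cleanly.
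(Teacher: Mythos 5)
Your proof is correct and follows essentially the same route as the paper's: the key step in both is that every countable subfamily of $\B$ occurs as $\{U_{\alpha,n}:n\in\N\}$ for some $\alpha<\C$, after which Claim~\ref{claim:1} supplies points of $Y$ in the prescribed sets together with a convergent subsequence (whose limit $y_{\alpha,\omega}$ lies in $Y$). Your extra bookkeeping --- reducing to pairwise disjoint families so the chosen $B_n$ are distinct, the bijection $\tau$ reconciling the two indexings, and embedding a single basic set into an infinite subfamily for the density argument --- actually tightens points the paper's terser proof glosses over (for instance, the constant sequence used there for density is not, as a set, an element of $[\B]^\omega$, and an arbitrary indexed sequence need not match the indexing chosen when enumerating $[\B]^\omega$).
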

\begin{proof}
Let
$\A=\{A_n:n\in\N\}$ be a countable sequence of elements of $\B. $ There exists $\alpha <\C$ such that 
$A_n=U_{\alpha ,n}$ for every $n\in\N$; that is,
$\A=\{U_{\alpha ,n}:n\in\N \}$.
By Claim~\ref{claim:1},
$y_{\alpha ,n}\in U_{\alpha ,n}$ for each $n\in \N $ and the sequence $\{y_{\alpha ,n}:n\in J_\alpha  \}$
converges to $y_{\alpha ,\omega }$. 
Since $\{y_{\alpha,n}:n\in\omega+1\}\subseteq Y$,
this proves that $Y$ is \ssp.

Let $O$ be a non-empty open subset of $L^\C$.
Since $\B$ is a base of $L^\C$, there exists $B\in\B$ such that
$B\subseteq O$. Let $\A$ be the constant sequence
all elements of which are equal to $B$. 
Then $\A=\{U_{\alpha ,n}:n\in\N \}$ for some  $\alpha<\C$,
and so $\{y_{\alpha,n}:n\in J_\alpha\}\subseteq B\cap Y\subseteq O\cap Y$.
This shows that $Y$ 
is dense in $L^\C.$
\end{proof}

\begin{claim}
\label{claim:3}
$Y$ 
is $\mathcal V$-independent.
\end{claim}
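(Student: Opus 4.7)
My plan is to prove Claim~\ref{claim:3} by reducing to finite subsets via Lemma~\ref{lemma2.3_DiS}(i), and then transferring the $\V$-independence of $R$ to each finite subset through a coordinate projection that bypasses the supports $S_\alpha$. The idea is that $y_{\alpha,n}$ looks like the ``diagonal'' $r_{\alpha,n}$ outside the countable set $S_\alpha$, so projecting to a coordinate outside all the relevant $S_\alpha$'s recovers exactly the $\V$-independent generators $r_{\alpha,n}$.

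Concretely, I would fix an arbitrary finite $F=\{y_{\alpha_i,n_i}:1\le i\le k\}\subseteq Y$ with the $y$'s pairwise distinct, and set $T=\bigcup_{i\le k}S_{\alpha_i}$. Because each $S_{\alpha}$ is at most countable by \eqref{eq:S_alpha}, the set $T$ is countable, so I may pick $\beta\in\C\setminus T$. Let $\pi_\beta:L^\C\to L$ be the $\beta$-coordinate projection. Since $\beta\notin S_{\alpha_i}$ for every $i$, formula \eqref{eq:5:old} gives $\pi_\beta(y_{\alpha_i,n_i})=r_{\alpha_i,n_i}$, so $\pi_\beta(F)=\{r_{\alpha_i,n_i}:1\le i\le k\}$ is a finite subset of $R$, hence $\V$-independent by Lemma~\ref{lemma2.3_DiS}(i). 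Moreover, $\pi_\beta\restriction_F$ is injective: distinct elements of $F$ must have distinct index pairs (as one sees by evaluating at a further coordinate outside both relevant $S_\alpha$'s), and distinct indices give distinct $r_{\alpha,n}$'s since the elements of the $\V$-independent set $R$ are pairwise distinct. Applying Lemma~\ref{lemma2.3_DiS}(ii) to $\pi_\beta$ and $F$ would then yield that $F$ is $\V$-independent, and the claim would follow.

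The main obstacle is verifying the remaining hypothesis $\langle F\rangle\in\V$ of Lemma~\ref{lemma2.3_DiS}(ii). This is the step where one must use that $H$ itself lies in $\V$: under that assumption $L=H^\omega\in\V$ by closure of $\V$ under products, hence $L^\C\in\V$, and then $\langle F\rangle\subseteq L^\C$ is in $\V$ by closure under subgroups. Without such an assumption on $H$, the essentially arbitrary values $x_{\alpha,n}(\gamma)\in L$ appearing at coordinates $\gamma\in S_\alpha$ of $y_{\alpha,n}$ could a priori violate $\V$-identities and push $\langle F\rangle$ outside $\V$; in the intended applications to the free and free Abelian groups of continuum rank, however, the chosen $H$ lies in $\V$, so the argument goes through as sketched.
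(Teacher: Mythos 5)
Your proof follows essentially the same route as the paper's: reduce to finite subsets via Lemma~\ref{lemma2.3_DiS}(i), project along a coordinate $\beta$ chosen outside the countable union of the relevant supports $S_\alpha$ so that, by \eqref{eq:5:old}, the finite set maps injectively onto a faithfully indexed (hence $\V$-independent) subset of $R$, and conclude via Lemma~\ref{lemma2.3_DiS}(ii). Your observation that the hypothesis $\langle F\rangle\in\V$ forces one to assume $H\in\V$ is consistent with the paper, whose proof likewise asserts $L\in\V$ without comment; this is implicit in the intended applications, where $H$ witnesses precompactness of $\V$ and therefore belongs to $\V$.
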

\begin{proof}
By Lemma~\ref{lemma2.3_DiS}~(i), it suffices to show that
every finite subset $X$ of $Y$ is $\V$-independent. Let $X$ be a finite subset of $Y$. There
exists a
finite subset $F$ of $\C$ 
such that 
\begin{equation}
\label{eq:X}
X\subseteq \{y_{\alpha,n}:(\alpha,n)\in F\times (\omega+1)\}.
\end{equation}

Since each set $S_\alpha$ is at most countable and $F$ is finite, 
the set $S=\bigcup_{\alpha\in F}S_\alpha$
is at most countable.
Therefore,
we can choose $\beta\in \C\setminus S.$
Let $\pi_\beta:L^\C\to L$ be the projection on the $\beta$th coordinate.
To prove that $X$ is $\V$-independent, it suffices to check 
that $G=L^\C$, $H=L$, $X$ and $f=\pi_\beta$ satisfy the assumptions of  Lemma~\ref{lemma2.3_DiS}~(ii).

Since $\V$ is a variety of groups, it is closed under taking arbitrary products and subgroups.
Since
$L\in\V$ and $\langle X\rangle$ is a subgroup of $L^\C$, this implies $\langle X\rangle\in\V$.
 
Let $(\alpha,n)\in F\times (\omega+1)$ be arbitrary.
Then 
$\alpha\in F$, and so $S_\alpha\subseteq S$.
Since $\beta\not\in S$, this implies
$\beta\not \in S_\alpha$.
Therefore,
$
\pi_\beta(y_{\alpha,n})=r_{\alpha,n}
$
by \eqref{eq:5:old}.
This shows that
$$\{\pi_\beta(y_{\alpha,n}):(\alpha,n)\in F\times (\omega+1)\}
=
\{r_{\alpha,n}:(\alpha,n)\in F\times (\omega+1)\}.
$$
Since the latter set is a faithfully indexed subset of 
$R$, it follows from \eqref{eq:X} that 
$f\restriction_X=\pi_\beta\restriction_X:X\to R$ 
is an  injection.
Since $f(X)=\pi_\beta(X)\subseteq R$
and $R$
is $\V$-independent,
$f(X)$ is $\V$-independent
 by Lemma~\ref{lemma2.3_DiS}~(i).
\end{proof}

The conclusion of our theorem now follows from Claims~
\ref{claim:1},~\ref{claim:2} and~\ref{claim:3}.
\endproof

\section{\ssp\ group topologies on $\V$-free groups}

\begin{definition}
\label{def:V:base:V:free}
Let $\V$ be a non-trivial variety of groups.
\begin{enumerate}
\item[(i)] A subset $X$ of a group $G$ is a {\em $\V$-base} of $G$ if $X$ is $\V$-independent and  $\gp{X}=G.$
\item[(ii)] A group is {\em $\V$-free} if it contains a $\V$-base.   
\end{enumerate}
\end{definition}

If $\V$ is a non-trivial variety, then for every set $X$ there exists a unique (up to isomorphism) group $F_X(\V)$ such that 
$X\subseteq F_X(\V)$ and $X$ is a $\V$-base of $F_X(\V).$ 

\begin{definition}
For every cardinal $\tau $, we use $F_\tau (\V)$ to denote the unique (up to isomorphism) $\V$-free group having a $\V$-base of 
cardinality $\tau .$ 
\end{definition}

\begin{definition}\cite[Definition 5.2]{DiS}
A variety $\V$ is said to be \emph{precompact\/} if there exists a compact zero-dimensional metric group $H\in\V$ with $r_\V(H)\geq \omega .$  
\end{definition}

\begin{corollary}
\label{precompact:corollary}
For a precompact variety $\V$, the group $F_\C(\V)$ admits a \ssp\ zero-dimensional group topology. 
\end{corollary}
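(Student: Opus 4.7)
The plan is to invoke Theorem~\ref{free_seq_pseudo} with the witness group $H$ of precompactness and to realize $F_\C(\V)$ as the subgroup generated by the \ssp\ dense $\V$-independent set it produces.

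More precisely, since $\V$ is precompact, I fix a compact zero-dimensional metric group $H\in\V$ with $r_\V(H)\ge\omega$. Applying Theorem~\ref{free_seq_pseudo} to this $H$ yields a dense \ssp\ $\V$-independent subset $X\subseteq H^\C$ with $|X|=\C$. Consider the abstract subgroup $G=\gp{X}\subseteq H^\C$, equipped with the subspace topology inherited from $H^\C$; this makes $G$ a (Hausdorff) topological group. Since $X$ is $\V$-independent we have $\gp{X}\in\V$, and $X$ is by construction a generating $\V$-independent set of $G$, that is, a $\V$-base. By the uniqueness clause following Definition~\ref{def:V:base:V:free}, $G$ is isomorphic as an abstract group to $F_\C(\V)$, so transporting the topology along this isomorphism gives a Hausdorff group topology on $F_\C(\V)$.

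To check that this topology on $F_\C(\V)\cong G$ has the desired properties, I note first that $H$ is zero-dimensional, hence so is the Tychonoff power $H^\C$, and therefore so is its subspace $G$. For \sspness, observe that $X$ is dense in $H^\C$ by Theorem~\ref{free_seq_pseudo}, and since $X\subseteq G\subseteq H^\C$ we have
\[
\Cl_G(X)=\Cl_{H^\C}(X)\cap G=H^\C\cap G=G,
\]
so $X$ is dense in $G$ as well. Because $X$ is \ssp\ as a subspace of $H^\C$ (and hence also as a subspace of $G$, since subspace topologies agree by transitivity), Corollary~\ref{dense} applied to $G$ with its dense subspace $X$ yields that $G$ is \ssp.

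The main content of the argument has already been absorbed by Theorem~\ref{free_seq_pseudo}; the only delicate point here is the identification $\gp{X}\cong F_\C(\V)$, which relies on the fact that a $\V$-independent generating set of cardinality $\C$ is precisely a $\V$-base, together with the uniqueness of the $\V$-free group on $\C$ generators. The rest (zero-dimensionality via passage to a subspace of a power of $H$, and \sspness\ via density and Corollary~\ref{dense}) is routine.
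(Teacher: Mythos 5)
Your proof is correct and follows essentially the same route as the paper's: apply Theorem~\ref{free_seq_pseudo} to the witness $H$ of precompactness, identify $\gp{X}\cong F_\C(\V)$ via the $\V$-base property, and conclude \sspness\ from density of $X$ in $\gp{X}$ together with Corollary~\ref{dense}, with zero-dimensionality inherited from $H^\C$. No gaps.
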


\proof
By Theorem~\ref{free_seq_pseudo},  $H^\C$ contains a dense \ssp\ $\mathcal V$-independent subset 
$X$ of size continuum. 
It follows from Definition~\ref{def:V:base:V:free}~(i)
that $X$ is a $\V$-base of $\gp{X}$, so $\gp{X}$ is algebraically isomorphic to $F_\C(\V)$.
Since 
$X\subseteq \gp{X}\subseteq H^\C$ and $X$ is dense in 
$H^\C$, we conclude that $X$ is also dense  in $\gp{X}$.
Since $X$ is \ssp, from Corollary~\ref{dense}
we conclude that $\gp{X}$ is \ssp\ as well.
Since $H$ is zero-dimensional, so is $H^\C$ and $\gp{X}$. 
\endproof

Since both the variety  $\mathcal{G}$ of all groups and the variety 
$\A$ of all Abelian groups are
precompact by \cite[Lemma 5.3]{DiS}, 
from Corollary~\ref{precompact:corollary} we obtain the following
corollary.
\begin{corollary}
\label{G:A:free:groups}
Both the free group $F_\C(\mathcal{G})$ with $\C$-many generators and the free Abelian group $F_\C(\A)$ with $\C$-many generators admit a \ssp\ zero-dimensional group topology. 
\end{corollary}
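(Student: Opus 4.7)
The plan is to deduce Corollary~\ref{G:A:free:groups} as a direct two-fold application of the already proved Corollary~\ref{precompact:corollary}, since that corollary asserts exactly that $F_\C(\V)$ admits a \ssp\ zero-dimensional group topology whenever the variety $\V$ is precompact. Thus the task reduces to verifying that both the variety $\mathcal{G}$ of all groups and the variety $\A$ of all Abelian groups are precompact in the sense of the definition just preceding the corollary; that is, each admits a compact zero-dimensional metric member $H$ with $r_\V(H)\ge\omega$.

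For this verification I would simply invoke \cite[Lemma 5.3]{DiS}, which establishes precompactness of both $\mathcal{G}$ and $\A$. To get a feel for the witnesses: for the Abelian case, a compact zero-dimensional metric Abelian group such as a countable power of a finite cyclic group has $\A$-rank at least $\omega$, so lies in $\A$ and witnesses $r_\A(H)\ge\omega$; for the variety of all groups, a suitable compact zero-dimensional metric group containing a countably infinite $\mathcal{G}$-independent (i.e.\ free) subset witnesses precompactness of $\mathcal{G}$. With precompactness of each variety in hand, applying Corollary~\ref{precompact:corollary} to $\V=\mathcal{G}$ produces the desired topology on $F_\C(\mathcal{G})$, and applying it to $\V=\A$ produces the desired topology on $F_\C(\A)$.

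There is no genuine obstacle here. All the substantive content — constructing the dense \ssp\ $\V$-independent subset $X$ of $H^\C$ via Theorem~\ref{free_seq_pseudo}, identifying $\gp{X}$ algebraically with $F_\C(\V)$ via Definition~\ref{def:V:base:V:free}~(i), upgrading \ssp ness of $X$ to \ssp ness of $\gp{X}$ via Corollary~\ref{dense}, and transferring zero-dimensionality from $H$ to $H^\C$ and hence to the subgroup $\gp{X}$ — was already absorbed into Corollary~\ref{precompact:corollary}. Consequently, the final corollary should reduce to a few lines, namely the citation \cite[Lemma 5.3]{DiS} followed by two invocations of Corollary~\ref{precompact:corollary}.
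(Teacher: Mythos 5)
Your proposal matches the paper's argument exactly: the paper derives this corollary in one sentence by citing \cite[Lemma 5.3]{DiS} for the precompactness of $\mathcal{G}$ and $\A$ and then applying Corollary~\ref{precompact:corollary} to each variety. Your additional remarks about explicit witnesses are harmless but not needed.
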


\begin{corollary}\label{free_abelian_seq_pseudo}
The group $\T^\C$ contains the free Abelian group $F_\C(\A)$ with $\C$-many generators  as a dense \ssp\
subgroup. 
\end{corollary}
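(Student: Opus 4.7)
The plan is to apply Theorem~\ref{free_seq_pseudo} directly with $H=\T$ and $\V=\A$, and then to argue as in the proof of Corollary~\ref{precompact:corollary}. Note that Corollary~\ref{precompact:corollary} itself cannot be used here, since its conclusion produces a zero-dimensional group topology, whereas $\T^\C$ is not zero-dimensional; what is needed is the construction inside $\T^\C$ itself, which is exactly what Theorem~\ref{free_seq_pseudo} delivers once the hypotheses are checked.

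Since $\T$ is visibly a compact metric Abelian group, the only nontrivial input is the inequality $r_\A(\T)\ge\omega$, i.e., the existence of a countably infinite $\A$-independent subset of $\T$. I would produce one explicitly: pick real numbers $\alpha_0,\alpha_1,\dots$ so that $\{1,\alpha_0,\alpha_1,\dots\}$ is $\mathbb{Q}$-linearly independent (e.g., any countable subset of a transcendence basis of $\R$ over $\mathbb{Q}$), and set $t_n=e^{2\pi i\alpha_n}\in\T$. Any nontrivial integer relation $\sum_{n\in F}k_n t_n=0_\T$ would force $\sum_{n\in F}k_n\alpha_n\in\mathbb{Z}$, contradicting the $\mathbb{Q}$-linear independence of $\{1\}\cup\{\alpha_n:n\in\N\}$. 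Hence $\{t_n:n\in\N\}$ generates a free Abelian subgroup of $\T$ and is therefore $\A$-independent, giving $r_\A(\T)\ge\omega$.

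With the hypotheses of Theorem~\ref{free_seq_pseudo} verified, that theorem yields a dense \ssp\ $\A$-independent subset $X\subseteq \T^\C$ with $|X|=\C$. By Definition~\ref{def:V:base:V:free}(i), $X$ is an $\A$-base of $\gp{X}$, so $\gp{X}$ is algebraically isomorphic to $F_\C(\A)$. The inclusion $X\subseteq \gp{X}\subseteq \T^\C$ together with the density of $X$ in $\T^\C$ forces $\gp{X}$ to be dense in $\T^\C$, and the same inclusions show that $X$ is dense in $\gp{X}$. Since $X$ is \ssp, Corollary~\ref{dense} then implies that $\gp{X}$ is \ssp, completing the proof. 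There is no genuine obstacle: the entire argument is a specialization of the machinery of the previous section, with the lone computational point being the standard transcendence-basis verification that $r_\A(\T)\ge\omega$.
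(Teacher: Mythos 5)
Your proof is correct and follows essentially the same route as the paper: apply Theorem~\ref{free_seq_pseudo} with $H=\T$ and $\V=\A$, then repeat the argument of Corollary~\ref{precompact:corollary} to pass from the dense \ssp\ $\A$-independent set $X$ to the dense \ssp\ subgroup $\gp{X}$ algebraically isomorphic to $F_\C(\A)$. The only difference is that the paper verifies the hypothesis $r_\A(\T)\ge\omega$ by citing \cite[Lemma 7.1.6]{AT} (which in fact yields an $\A$-independent subset of $\T$ of cardinality $\C$), whereas you give a correct self-contained construction from $\mathbb{Q}$-linearly independent reals.
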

\begin{proof}
The circle group $\T$ is a compact 
metric Abelian group. By \cite[Lemma 7.1.6]{AT}, $\T$ contains
an 
$\A$-independent set $X$ of cardinality $\C.$ By 
Theorem~\ref{free_seq_pseudo},
the group $\T^\C$ contains 
a dense $\A$-independent \ssp\ subspace $X$ of size $\C$. 
Arguing as in the proof of Corollary~\ref{precompact:corollary}, we deduce that
$\gp{X}$ is a dense \ssp\ subgroup of $\T^\C$ algebraically isomorphic to $F_\C(\A)$.
\end{proof}

\begin{example}
Let $G=F_\C(\mathcal{G})$ be the $\mathcal{G}$-free group of size $\mathfrak{c}$.
By Corollary~\ref{G:A:free:groups}, {\em $G$ admits a \ssp\ group topology}. 
On the other hand, {\em no non-trivial subgroup of $G$ admits a countably compact group topology}.
Indeed, if $H$ is a subgroup of $G$, then it is $\mathcal{G}$-free, and it is known that no non-trivial $\mathcal{G}$-free group 
admits a countably compact group topology. 
In particular, {\em there exists a \ssp\ group without non-trivial countably compact subgroups}.
\end{example}

\begin{example}\label{ex:old:5.6}
Let $G=F_{\C}(\mathcal{A})^\omega$ be the countable power of the $\mathcal{A}$-free group of size $\mathfrak{c}$.
Then $G$ is an Abelian group of size $\mathfrak{c}$.
By Corollary~\ref{G:A:free:groups}, $F_{\C}(\mathcal{A})$ admits a \ssp\ group topology. Since the class of \ssp\ spaces is closed under arbitrary products, $G$ also admits a \ssp\ group topology.
By \cite[Theorem 17]{Tomita}, this topology cannot be countably compact.
Therefore, {\em $G$ is an Abelian group (of size $\mathfrak{c}$) which admits a \ssp\ group topology, yet does not admit a countably compact group topology.} This example proves that arrows 2 and 5 are not reversible even for topological groups.  
\end{example}

\section{Final remarks and open questions}

Garc\'ia-Ferreira and Tomita asked whether every compact group 
contains a proper dense \strong\ subgroup
\cite[Question 2.10]{GT}.
This question
has a 
consistent positive answer. Indeed, assuming $2^{\omega _1}>\C$,
it was proved in \cite{IS} 
that  every non-metrizable
compact group has a proper dense countably compact subgroup,
and countably compact spaces are \strong; see  Diagram 1.

In ZFC, one can obtain a positive solution to even stronger version of the question of Garc\'ia-Ferreira and Tomita for large classes of topological groups.
 
\begin{remark}
(i) It was proved in \cite{IS} that {\em if a compact group $G$ can be mapped onto a product of $w(G)$-many nontrivial 
compact metric groups,
then $G$ contains $|G|$-many dense, 
$\omega $-bounded proper subgroups.\/} 
(Here $w(G)$ denotes the weight of $G$.)

(ii) The assumption of item (i) is satisfied when $G$ is either connected or Abelian 
\cite{IS}. Therefore,
{\em if a non-metrizable compact 
group $G$ is either connected or Abelian,
then $G$  contains $|G|$-many dense, 
$\omega $-bounded proper subgroups.\/} 

(iii) Since $\omega$-bounded groups are \ssp\ 
by Corollary~\ref{dyadic:spaces}, it follows from (ii) that
{\em if a non-metrizable compact 
group $G$ is either connected or Abelian,
then $G$ 
contains $|G|$-many dense, 
\ssp\/ proper subgroups.\/}
\end{remark}

The following question is related to this remark.

\begin{question}
Does every non-metrizable \SSP\ Abelian group contains a proper dense \SSP\ subgroup?
\end{question}

Example \ref{CH:example}
justifies the following question.

\begin{question}
\label{ZFC:question}
(i) Is there a ZFC example of \sp\ (Abelian) group which is not \ssp?

(ii) Is there a ZFC example of a countably compact  (Abelian) group which is not \ssp?
\end{question}

It should be interesting to investigate how the new notion is behaving in $C_p(X,G)$-theory \cite{SS}.
Let $G$ be a topological group. For a topological space $X$, we denote by $C_p(X,G)$ the group of all $G$-valued continuous maps from $X$ into $G$ endowed with the topology of pointwise convergence, i.e., the subspace topology $C(X,G)$ inherits from the Tychonoff product $X^G$. 
A space $X$ is said to be {\em $b_G$-discrete\/} if every countable subset $D$ of $X$ is discrete and each function from $D$ to $G$ can be extended to a continuous function from $X$ to $G$.

\begin{remark}
{\em If $G$ is a 
metric
group and $X$ is a topological space such that $C_p(X,G)$ is pseudocompact and dense in $G^X$, then $G$ is compact
 and $X$ is $b_G$-discrete.\/}
Indeed, 
since
$C_p(X,G)$ is pseudocompact and dense in $G^X$, the latter space is pseudocompact. Since $G$ is a continuous projection of the pseudocompact space $G^X$, it is pseudocompact. 
Now it remains only to note that a metric
pseudocompact space is compact.
As a dense pseudocompact subgroup of the compact group $G^X$, the group $C_p(X,G)$ is $G_\delta$-dense in $G^X$.
Since $G$ is first countable, 
this is equivalent to $X$ 
being
$b_G$-discrete
\cite[Proposition 4.6]{DRT}. 
\end{remark}

This remark motivates the compactness condition on $G$ in the following problem.

\begin{problem}
\label{8.5}
Let $G$ be a compact metric group and $X$ be a $b_G$-space. 
Find necessary and sufficient conditions on a space $X$ for $C_p(X,G)$ to be \SSP.
\end{problem}

Let $G$ be a topological group.
Recall that spaces $X$ and $Y$ are called {\em $G$-equivalent\/}
if topological groups $C_p(X,G)$ and $C_p(Y,G)$ are topologically isomorphic \cite{SS}.

\begin{question}
\label{8.6}
Let $G$ be a topological group, and let
$X$ and $Y$ be $G$-equivalent spaces such that $C_p(X,G)$ is dense in $G^X$ and $C_p(Y,G)$ is dense in $G^Y$.
If $X$ is \SSP, must $Y$ also be \SSP?
\end{question}

\medskip
\noindent
{\bf Acknowledgement:\/} 
The authors would like to thank Boaz Tsaban for his comment
that inspired Problem \ref{8.5} and Question \ref{8.6}.
This paper was written during the first listed author's stay at the Department of Mathematics 
of Faculty of Science of Ehime University (Matsuyama, Japan)
in the capacity of Visiting Foreign Researcher under the support by 
CONACyT of M\'exico: Estancias Posdoctorales al Extranjero propuesta No. 263464. 
He would like to thank CONACyT for its support and the host institution for its hospitality.

\end{document}